\documentclass[12pt]{article}%
\usepackage{amsmath, amsfonts, amsthm, color,latexsym}
\usepackage{amsmath}
\usepackage{amsfonts}
\usepackage{amssymb}
\usepackage{color} %color,soul
\allowdisplaybreaks[4]
\usepackage{graphicx}%
\setcounter{MaxMatrixCols}{30}
%TCIDATA{OutputFilter=latex2.dll}
%TCIDATA{Version=5.50.0.2890}
%TCIDATA{LastRevised=Sunday, March 28, 2010 09:24:22}
%TCIDATA{<META NAME="GraphicsSave" CONTENT="32">}
%TCIDATA{<META NAME="SaveForMode" CONTENT="1">}
%TCIDATA{BibliographyScheme=Manual}
%BeginMSIPreambleData
\providecommand{\U}[1]{\protect\rule{.1in}{.1in}}
%EndMSIPreambleData
\newtheorem{teo}{Theorem}[section]
\newtheorem{prop}[teo]{Proposition}
\newtheorem{cor}[teo]{Corollary}
\newtheorem{ex}[teo]{Example}
\newtheorem{obs}[teo]{Remark}

\newtheorem{lema}[teo]{Lemma}
\newtheorem{final remark}[teo]{Final Remark}
\newtheorem{definition}[teo]{Definition}
\textwidth=16cm
\textheight=23cm
\hoffset=-15mm
\voffset=-20mm

\newcommand{\an}{\left \Vert} % ABRE NORMA

\newcommand{\fn}{\right \Vert} % FECHA NORMA

 % ABRE MODULO

 % FECHA MODULO

 % ABRE COLCHETES

 % FECHA COLCHETES

 % ABRE COLCHETES

 % FECHA COLCHETES

\newcommand{\ap}{\left (} % ABRE PARENTESIS

\newcommand{\fp}{\right )} % FECHA PARENTESIS

\begin{document}

\title{\sc Constructing hyper-ideals of multilinear operators between Banach spaces}
\date{}
\author{Geraldo Botelho\thanks{Supported by CNPq Grant
305958/2014-3 and Fapemig Grant PPM-00326-13.\hfill\newline2010 Mathematics Subject
Classification: 47L20, 47B10, 46G25, 47L22. \newline Keywords: Banach spaces, multilinear operators, hyper-ideals, $\cal I$-bounded sets.}  ~and Ewerton R. Torres}\maketitle

\begin{abstract} In view of the fact that some classical methods to construct multi-ideals fail in constructing hyper-ideals, in this paper we develop two new methods to construct hyper-ideals of multilinear operators between Banach spaces. These methods generate new classes of multilinear operators and show that some important well studied classes are Banach or $p$-Banach hyper-ideals.
\end{abstract}

\section{Introduction and background}
Ideals of multilinear operators between Banach spaces (or simply multi-ideals), which happen to be classes of multilinear operators that are stable with respect to the composition with linear operators, were introduced by Pietsch \cite{pietsch} as a first attempt to extend the successful theory of ideals of linear operators (operator ideals) to the nonlinear setting. The theory of multi-ideals turned out be successful itself, and a refinement of this concept, called hyper-ideals, was introduced in \cite{ewerton} according to the following philosophy: the nonlinearity of the multilinear setting is better explored by considering classes of multilinear operators that are stable with respect to the composition with multilinear operators -- whenever this composition is possible, of course -- rather than with linear operators.

The basics of the theory of hyper-ideals and plenty of distinguished examples can be found in \cite{ewerton}. As the theory of hyper-ideals is quite more restrictive than the theory of multi-ideals, it is expected that some techniques do not pass from multi-ideals to hyper-ideals. This is exactly what happens with some general methods to construct multi-ideals. While the technique concerning composition ideals works nicely for hyper-ideals (see \cite[Theorem 4.2]{ewerton}), the factorization and the linearization methods are helpless in the realm of hyper-ideals (for a description of such methods, see, e.g. \cite{botelho1, B-Jii}). An illustration of the failure of these methods in the generation of hyper-ideals can be found in Example \ref{ex1}. The purpose of this paper is to fill this gap by developing two general methods to construct hyper-ideals.

In Section 2 we introduce a method based on the transformation of finite vector-valued sequences by multilinear operators. We show that this method, which is akin to different sorts of {\it summing multilinear operators}, gives rise to new classes and recovers, as a particular instance, the important class of strongly summing multilinear operators. In Section 3 we show that, proceeding for multilinear operators as Aron and Rueda \cite{aronrueda2} did for homogeneous polynomials, we end up with Banach hyper-ideals. In this fashion, classical multi-ideals, such as compact, weakly compact and $p$-compact multilinear operators, are shown to be Banach hyper-ideals.

Along the paper, $n$ is a positive integer, $E, E_n, F, G, G_n, H$, shall be Banach spaces over $\mathbb{K} = \mathbb{R}$ or $\mathbb{C}$, $B_E$ denotes the closed unit ball of $E$, $E'$ denotes the topological dual of $E$ and ${\mathcal{L}(E_1,\ldots,E_n;F)}$ stands for the Banach space of continuous $n$-linear operators from $E_1 \times \cdots \times E_n$ to $F$ endowed with the usual sup norm. When $F = \mathbb{K}$, the space of continuous $n$-linear forms is denoted by   ${\mathcal{L}(E_1,\ldots,E_n)}$. To avoid ambiguity, the space of $n$-linear forms on $\mathbb{K}$ shall be denoted by ${\cal L}(^n\mathbb{K};\mathbb{K})$. Given functionals $\varphi_1 \in E_1', \ldots, \varphi_n \in E_n'$ and a vector $b \in F$, $\varphi_1 \otimes \cdots \otimes \varphi_n \otimes b$ denotes the element of ${\mathcal{L}(E_1,\ldots,E_n;F)}$ given by
$$\varphi_1 \otimes \cdots \otimes \varphi_n \otimes b(x_1, \ldots, x_n) = \varphi_1(x_1) \cdots \varphi_n(x_n)b. $$
The elements of the subspace of ${\mathcal{L}(E_1,\ldots,E_n;F)}$ generated by operators of the form $\varphi_1 \otimes \cdots \otimes \varphi_n \otimes b$ are called multilinear operators of finite type. A linear space-valued map is of finite rank if its range generates a finite dimensional subspace of the target space.

Normed, $p$-normed and Banach ideals of linear operators (operator ideals) are always meant in the sense of \cite{klauslivro, pietschlivro}. According to \cite{ewerton}, a \textit{hyper-ideal of multilinear operators}, or simply a \textit{hyper-ideal}, is a subclass $\mathcal{H}$ of the class of all continuous multilinear operators between Banach spaces such that all components $$\mathcal{H}(E_1,\ldots, E_n;F):=\mathcal{L}(E_1,\ldots, E_n;F)\cap \mathcal{H}$$
satisfy:\\
$(1)$ $\mathcal{H}(E_1,\ldots, E_n;F)$ is a linear subspace of $\mathcal{L}(E_1,\ldots, E_n;F)$ which contains the $n$-linear operators of finite type;\\
$(2)$ {\bf The hyper-ideal property:} Given natural numbers $n$ and $1\le m_1<\cdots<m_n$, and  Banach spaces $G_1,\ldots,G_{m_n}$, $E_1,\ldots,E_n$, $F$ and $H$, if  $B_1\in \mathcal{L}(G_1,\ldots, G_{m_1};E_1), \ldots, B_n\in \mathcal{L}(G_{m_{n-1}+1},\ldots, G_{m_n};E_n)$, $t \in \mathcal{L}(F;H)$ and
$A \in \mathcal{H}(E_1,\ldots, E_n;F)$, then $t\circ A\circ(B_1,\ldots,B_n)$ belongs to $\mathcal{H}(G_1,\ldots, G_{m_n};H)$.

If there exist $p\in (0,1]$ and a map $\|\cdot\|_{\mathcal{H}} \colon \mathcal{H} \longrightarrow [0,\infty)$ such that:\\
(a) $\|\cdot\|_{\mathcal{H}}$ restricted to any component $\mathcal{H}(E_1,\ldots, E_n;F)$ is a $p$-norm;\\
(b) $\|I_n \colon \mathbb{K}^n\longrightarrow \mathbb{K}, I_n(\lambda_1,\ldots,\lambda_n)=\lambda_1\cdots\lambda_n\|_{\cal H} =1$ for every $n$;\\
(c) {\bf The hyper-ideal inequality:} If $A \in \mathcal{H}(E_1,\ldots, E_n;F)$, $B_1\in \mathcal{L}(G_{1},\ldots, G_{m_1};E_1)$, $\ldots$, $B_n\in \mathcal{L}(G_{m_{n-1}+1},\ldots, G_{m_n};E_n)$ and $t \in \mathcal{L}(F;H)$, then
\begin{equation}\|t\circ A\circ(B_1,\ldots,B_n)\|_{\mathcal{H}}\le\|t\|\cdot\|A\|_{\mathcal{H}}\cdot
\|B_1\|\cdots\|B_n\|,\label{eqhi}\end{equation}
then $(\mathcal{H},\|\cdot\|_{\mathcal{H}})$ is called a \textit{$p$-normed hyper-ideal}. Normed, Banach and $p$-Banach hyper ideals are defined in the obvious way.

If the hyper-ideal property and the hyper-ideal inequality are required to hold only for the composition with linear operators on the left hand side, that is, if they hold in the particular case where $m_1 = 1, m_2 = 2, \ldots, m_n = n$ and $B_1,B_2, \ldots, B_n$ are linear operators, then we recover the notion of multi-ideals (normed, $p$-normed, Banach, $p$-Banach multi-ideals). For the theory of multi-ideals we refer to \cite{botelho1, klausdomingo, joilson}.

Let us give an illustrative example of the failure of the factorization method in the generation of hyper-ideals:

\begin{ex}\label{ex1}\rm Starting the factorization method with the ideal $\pi_p$ of absolutely $p$-summing operators, we obtain the multi-ideal ${\cal L}_{d,p}$ of $p$-dominated multilinear operators \cite[Theorem 13]{pietsch} (for a proof, see \cite[Remark 3.3]{bpr1}). In \cite[Proposition 4.2(b)]{popass} it is proved that ${\cal L}_{d,p}$ fails to be a hyper-ideal.
\end{ex}

The following criterion shall be used twice:

\begin{teo}\label{criterio} {\rm \cite[Theorem 2.5]{ewerton}} A class $\mathcal{H}$ of continuous multilinear operators endowed with a map $\|\cdot\|_\mathcal{H}\colon\mathcal{H}\longrightarrow [0,+\infty)$ is a $p$-Banach hyper-ideal, $0<p\leq 1$, if and only if the following conditions are satisfied:\\
{\rm (i)} $I_n\in\mathcal{H}(^n\mathbb{K};\mathbb{K})$ and $\|I_n\|_{\mathcal{H}}=1$ for every $n \in \mathbb{N}$;\\
{\rm (ii)} If $(A_j)_{j=1}^\infty\subseteq\mathcal{H}(E_1,\ldots,E_n;F)$ is such that $\sum\limits_{j=1}^\infty\|A_j\|_\mathcal{H}^{p}<\infty$, then $$A:=\sum\limits_{j=1}^\infty A_j\in\mathcal{H}(E_1,\ldots,E_n;F)\ \mbox{and}\ \|A\|_\mathcal{H}^{p}\le\sum\limits_{j=1}^\infty \|A_j\|_\mathcal{H}^{p};$$
{\rm (iii)} $(\cal H, \|\cdot\|_{\cal H})$ enjoys the hyper-ideal property and the hyper-ideal inequality. \end{teo}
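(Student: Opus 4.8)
The plan is to prove both implications, using throughout a single auxiliary estimate: that the hyper-ideal $p$-norm dominates the usual sup norm on each component, i.e.\ $\|A\|\le\|A\|_{\mathcal H}$ for every $A\in\mathcal{H}(E_1,\ldots,E_n;F)$. To establish it I would fix $x_1\in B_{E_1},\ldots,x_n\in B_{E_n}$ and $\varphi\in B_{F'}$, and test $A$ against the one-dimensional linear maps $B_i\colon\mathbb{K}\to E_i$, $B_i(\lambda)=\lambda x_i$ (so $\|B_i\|=\|x_i\|\le 1$), together with $t=\varphi\colon F\to\mathbb{K}$. A direct computation gives $\varphi\circ A\circ(B_1,\ldots,B_n)=\varphi\bigl(A(x_1,\ldots,x_n)\bigr)\,I_n$, whence the hyper-ideal inequality and $\|I_n\|_{\mathcal H}=1$ yield $|\varphi(A(x_1,\ldots,x_n))|\le\|A\|_{\mathcal H}$; taking suprema over $\varphi$ and the $x_i$ produces the estimate. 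This inequality is the linchpin of the argument: it forces definiteness of $\|\cdot\|_{\mathcal H}$ and guarantees that convergence in the $p$-norm entails sup-norm convergence, so that $\|\cdot\|_{\mathcal H}$-limits coincide with genuine (pointwise) sums.

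For the forward implication, (i) is simply condition (b) of the definition and (iii) is part of the definition, so only (ii) needs work. Given $\sum_j\|A_j\|_{\mathcal H}^p<\infty$, the $p$-subadditivity of $\|\cdot\|_{\mathcal H}$ makes the partial sums $S_N=\sum_{j=1}^N A_j$ a $\|\cdot\|_{\mathcal H}$-Cauchy sequence, and completeness of the $p$-Banach component supplies a limit $A$ inside it; by the auxiliary estimate $S_N\to A$ also in sup norm, so $A$ is the actual sum $\sum_j A_j$. Passing to the limit in $\|S_N\|_{\mathcal H}^p\le\sum_{j=1}^N\|A_j\|_{\mathcal H}^p$, using continuity of $t\mapsto\|t\|_{\mathcal H}^p$ (the reverse $p$-triangle inequality), gives $\|A\|_{\mathcal H}^p\le\sum_j\|A_j\|_{\mathcal H}^p$.

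For the converse, assume (i)--(iii). Applying (ii) to the sequence $A,B,0,0,\ldots$ shows at once that each component is closed under addition and that $\|A+B\|_{\mathcal H}^p\le\|A\|_{\mathcal H}^p+\|B\|_{\mathcal H}^p$; writing $\lambda A=(\lambda\,\mathrm{id}_F)\circ A\circ(\mathrm{id},\ldots,\mathrm{id})$ and applying (iii) (to $\lambda$ and to $1/\lambda$) gives homogeneity, while the auxiliary estimate gives definiteness, so each component is a linear subspace on which $\|\cdot\|_{\mathcal H}$ is a $p$-norm. Finite-type operators lie in $\mathcal H$ because $\varphi_1\otimes\cdots\otimes\varphi_n\otimes b=t\circ I_n\circ(\varphi_1,\ldots,\varphi_n)$ with $t(\lambda)=\lambda b$, using (i) and (iii); condition (b) and the hyper-ideal axioms are exactly (i) and (iii). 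Completeness is the remaining point: (ii) asserts precisely that every absolutely $p$-summable series converges in each component (the tails satisfy $\|A-S_N\|_{\mathcal H}^p\le\sum_{j>N}\|A_j\|_{\mathcal H}^p\to 0$), and this is the standard characterization of $p$-Banach completeness, obtained by passing to a subsequence of a Cauchy sequence whose successive $p$-normed differences are summable.

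I expect the main obstacle to be the auxiliary estimate $\|A\|\le\|A\|_{\mathcal H}$ and the careful bookkeeping of the interplay between the two norms, since everything else hinges on it; a secondary subtlety is recognizing that condition (ii), applied once to finite sequences and once to genuinely infinite ones, simultaneously encodes the $p$-triangle inequality, the subspace structure, and the completeness of each component.
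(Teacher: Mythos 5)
This statement is quoted from \cite[Theorem 2.5]{ewerton} and the present paper gives no proof of it, so there is nothing in the source to compare against line by line; judged on its own, your argument is correct and is the standard one for criteria of this Pietsch type. The two load-bearing steps are both sound: the estimate $\|A\|\le\|A\|_{\mathcal H}$ obtained by composing with rank-one maps $B_i(\lambda)=\lambda x_i$ and a functional $\varphi$ so that $\varphi\circ A\circ(B_1,\ldots,B_n)=\varphi(A(x_1,\ldots,x_n))I_n$, and the observation that condition (ii) applied to finite sequences yields the $p$-triangle inequality and additive closure while applied to infinite ones it yields completeness via the absolutely-$p$-summable-series characterization. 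You also correctly handle the points that are easy to overlook: homogeneity must be extracted from (iii) before the auxiliary estimate can be used for definiteness in the converse direction, and the finite-type operators enter via $\varphi_1\otimes\cdots\otimes\varphi_n\otimes b=t\circ I_n\circ(\varphi_1,\ldots,\varphi_n)$ together with (i) and (iii).
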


\begin{obs}\label{rema}\rm In \cite[Corollary 3.3]{ewerton} it is proved that every  hyper-ideal contains the finite rank multilinear operators.
\end{obs}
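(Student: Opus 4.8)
The plan is to reduce the assertion to scalar-valued multilinear forms and then to invoke the full hyper-ideal property, since property~(1) alone does \emph{not} suffice here: for $n\ge 2$ the finite rank operators form a strictly larger class than the finite type operators. Indeed, if $\psi\in\mathcal{L}(E_1,\ldots,E_n)$ is a continuous $n$-linear form that is not of finite type and $0\neq b\in F$, then $\psi\otimes b$ has one-dimensional range, hence is of finite rank, yet is not of finite type. Thus the real content of the statement lies exactly in handling operators of the form $\psi\otimes b$ with $\psi$ an arbitrary form.

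First I would carry out the reduction. Let $A\in\mathcal{L}(E_1,\ldots,E_n;F)$ be of finite rank and let $M\subseteq F$ be the finite dimensional subspace generated by its range, with basis $b_1,\ldots,b_k$. Taking the coordinate functionals of this basis on $M$ and extending them to $F$ by Hahn--Banach, we obtain $c_1,\ldots,c_k\in F'$ with $c_i(b_j)=\delta_{ij}$. Putting $\psi_i:=c_i\circ A\in\mathcal{L}(E_1,\ldots,E_n)$ and using that $A(x_1,\ldots,x_n)\in M$ for all arguments, we get $A=\sum_{i=1}^k \psi_i\otimes b_i$. As each component of $\mathcal{H}$ is a linear subspace, it is enough to show that $\psi\otimes b\in\mathcal{H}(E_1,\ldots,E_n;F)$ for every continuous $n$-linear form $\psi$ and every $b\in F$.

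The key step produces $\psi\otimes b$ by a single, judicious application of the hyper-ideal property. The linear operator $t\colon\mathbb{K}\to F$, $t(\lambda)=\lambda b$, is of finite rank, and $I_1=\mathrm{id}_{\mathbb{K}}\in\mathcal{H}(\mathbb{K};\mathbb{K})$ (it is of finite type, cf.\ also Theorem~\ref{criterio}(i)). View the given form $\psi$ as a scalar-valued operator $\psi\in\mathcal{L}(E_1,\ldots,E_n;\mathbb{K})$ and use it as the single right factor $B_1$ in the hyper-ideal property (so that, in its statement, the number of inner operators equals $1$ and $m_1=n$). Then $t\circ I_1\circ(\psi)\in\mathcal{H}(E_1,\ldots,E_n;F)$, and since $t\circ I_1\circ(\psi)(x_1,\ldots,x_n)=t(\psi(x_1,\ldots,x_n))=\psi(x_1,\ldots,x_n)\,b$, this composition is precisely $\psi\otimes b$. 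Combined with the reduction and the linearity of the components, this gives $A\in\mathcal{H}(E_1,\ldots,E_n;F)$.

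I expect the main obstacle to be conceptual rather than computational: one must realise that property~(1) is genuinely insufficient and then locate the precise instance of the hyper-ideal property --- composing the rank-one linear map $t$ with the \emph{multilinear} form $\psi$ on the right --- that manufactures $\psi\otimes b$ even when $\psi$ is not of finite type. Finally, if $\mathcal{H}$ carries a $p$-norm, the same composition together with the hyper-ideal inequality~(\ref{eqhi}) yields $\|\psi\otimes b\|_{\mathcal{H}}\le\|t\|\cdot\|I_1\|_{\mathcal{H}}\cdot\|\psi\|=\|b\|\cdot\|\psi\|$, so that the finite rank operators are contained in $\mathcal{H}$ with a continuous inclusion.
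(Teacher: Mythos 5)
Your proof is correct and is essentially the argument behind the cited result: the paper itself gives no proof here (it only points to \cite[Corollary 3.3]{ewerton}), and the standard proof there is exactly your reduction $A=\sum_{i=1}^k\psi_i\otimes b_i$ followed by realising $\psi\otimes b$ as $t\circ I_1\circ(\psi)$ with $t(\lambda)=\lambda b$ and invoking the hyper-ideal property with a single multilinear inner factor. Your observation that property (1) alone is insufficient for $n\ge 2$, together with the norm estimate $\|\psi\otimes b\|_{\mathcal H}\le\|b\|\cdot\|\psi\|$, is accurate and matches the cited source.
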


\section{The inequality method}\label{idtop}

The method that we introduce in this section generates new hyper-ideals, such as the class of strongly almost summing operators (cf. Corollary \ref{sas}), and recovers, as particular instances, well studied classes, such as strongly summing multilinear operators (cf. Corollary \ref{pssp}).

\begin{definition}\label{defpf}\rm (a) Let $0<p\le1$. By $\cal BAN$ we denote the class of all Banach spaces over $\mathbb{K}= \mathbb{R}$ or $\mathbb{C}$ and by $p{\rm-}{\cal BAN}$ the class of all $p$-Banach spaces over $\mathbb{K}$. A correspondence $$\mathcal{X}\colon \mathcal{BAN} \longrightarrow p{\rm -}\mathcal{BAN}$$ that associates to each Banach space $E$  a $p$-Banach space $(\mathcal{X}(E),\|\cdot\|_{\mathcal{X}(E)})$ is called a {\it $p$-sequence functor} if:\\
(i) $\mathcal{X}(E)$ is a linear subspace of $E^{\mathbb{N}}$  with the usual algebraic operations;\\
(ii) For all $x \in E$ and $j \in \mathbb{N}$, we have $(0,\ldots,0,x,0,\ldots)\in \mathcal{X}(E)$, where $x$ is placed at the $j$-th coordinate, and $\|(0,\ldots,0,x,0,\ldots)\|_{\mathcal{X}(E)}=\|x\|_E$.\\
(iii) For every $u\in \mathcal{L}(E;F)$ and every finite $E$-valued sequence $(x_j)_{j=1}^k := (x_1,\ldots,x_k,0,0,\ldots)$, $k \in \mathbb{N}$, it holds $$\|(u(x_j))_{j=1}^k\|_{\mathcal{X}(F)}\le \|u\|\cdot\|(x_j)_{j=1}^k\|_{\mathcal{X}(E)}.$$
When $p=1$ we simply say that $\cal X$ is a \textit{sequence functor}.\\
(b) Let $0 < p,q \leq 1$. A $p$-sequence functor $\mathcal{X}$ is \textit{scalarly dominated} by the $q$-sequence functor $\mathcal{Y}$ if, for every finite sequence $(\lambda_j)_{j=1}^k\subseteq\mathbb{K}$, $k \in \mathbb{N}$, we have $$\|(\lambda_j)_{j=1}^k\|_{\mathcal{X}(\mathbb{K})}\le \|(\lambda_j)_{j=1}^k\|_{\mathcal{Y}(\mathbb{K})}.$$
\end{definition}

The term {\it sequence functor} was used in \cite{laacariello} in a different sense.

\begin{ex}\label{fex}\rm (a) For $p >0$, the following correspondences are $p$-sequence functors (sequence functors if $p \geq 1$): \\
\indent $\bullet$ $E \longmapsto(\ell_p(E),\|\cdot\|_p)$ (absolutely $p$-summable sequences).\\
\indent$\bullet$ $E \longmapsto(\ell_p^w(E),\|\cdot\|_{w,p})$ (weakly $p$-summable sequences \cite{diestel}).\\
\indent$\bullet$ $E \longmapsto(\ell_p^u(E),\|\cdot\|_{w,p})$ (unconditionally $p$-summable sequences \cite{klauslivro}).\\
\indent$\bullet$ $E \longmapsto(\ell_p\langle E \rangle,\|\cdot\|_{\ell_p\langle E \rangle})$ (Cohen strongly $p$-summable sequences \cite{jamilson}).\\

(b) The following correspondences are sequence functors: \\
\indent$\bullet$ $E\longmapsto(c_0(E),\|\cdot\|_{\infty})$ (norm null sequences).\\
\indent$\bullet$ $E\longmapsto(c_0^w(E),\|\cdot\|_{\infty})$ (weakly null sequences).\\
\indent$\bullet$ $E\longmapsto(\ell_\infty(E),\|\cdot\|_{\infty})$ (bounded sequences).\\
\indent$\bullet$ $ E \longmapsto(Rad(E),\|\cdot\|_{Rad(E)})$, where $Rad(E)$ is the space of almost unconditionally summable $E$-valued sequences \cite[Chapter 12]{diestel} and
$$\|(x_j)_{j=1}^\infty\|_{Rad(E)}=\ap\int_0^1\an\sum\limits_{j=1}^\infty r_j(t)x_j\fn^2 dt\fp^{1/2},$$
where $(r_j)_{j=1}^\infty$ are the Rademacher functions.\\
(c) Among several other obvious dominations, for $0 < p < q$, $\ell_q(\cdot)$ is scalarly dominated by $\ell_p(\cdot)$, as well as  $\ell_q^w(\cdot)$ by  $\ell_p^w(\cdot)$, $\ell_q^u(\cdot)$ by $\ell_p^u(\cdot)$ and $\ell_q\langle\cdot\rangle$ by $\ell_p\langle\cdot\rangle$.\end{ex}

To introduce the inequality method we need the

\begin{lema} Let $\cal X$ be a $p$-sequence functor, $0 < p \leq 1$ and $(x_j^1)_{j=1}^k\subseteq E_1,\ldots,\ (x_j^n)_{j=1}^k\subseteq E_n$ be finite sequences. Then
$$\sup\limits_{T\in B_{\mathcal{L}(E_1,\ldots,E_n)}} \|(T(x_j^1,\ldots,x_j^n))_{j=1}^k\|_{\mathcal{X}(\mathbb{K})}<\infty.$$
\end{lema}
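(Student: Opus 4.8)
The plan is to exploit the fact that the sequence $(T(x_j^1,\ldots,x_j^n))_{j=1}^k$ is a \emph{scalar} sequence with only finitely many nonzero entries, so that its $\mathcal{X}(\mathbb{K})$-norm can be controlled entrywise. Writing $e_j := (0,\ldots,0,1,0,\ldots) \in \mathbb{K}^{\mathbb{N}}$ for the sequence having $1$ in the $j$-th coordinate, condition (ii) of Definition \ref{defpf} (applied with $E = \mathbb{K}$ and $x=1$) guarantees that each $e_j$ belongs to $\mathcal{X}(\mathbb{K})$ and that $\|e_j\|_{\mathcal{X}(\mathbb{K})} = \|1\|_{\mathbb{K}} = 1$. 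Since $\mathcal{X}(\mathbb{K})$ is a linear subspace of $\mathbb{K}^{\mathbb{N}}$, the finite sequence decomposes inside $\mathcal{X}(\mathbb{K})$ as the \emph{finite} sum $(T(x_j^1,\ldots,x_j^n))_{j=1}^k = \sum_{j=1}^k T(x_j^1,\ldots,x_j^n)\, e_j$.

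Next I would apply the $p$-triangle inequality together with the homogeneity of the $p$-norm $\|\cdot\|_{\mathcal{X}(\mathbb{K})}$ to this finite sum, which gives
$$\|(T(x_j^1,\ldots,x_j^n))_{j=1}^k\|_{\mathcal{X}(\mathbb{K})}^p \le \sum_{j=1}^k |T(x_j^1,\ldots,x_j^n)|^p \, \|e_j\|_{\mathcal{X}(\mathbb{K})}^p = \sum_{j=1}^k |T(x_j^1,\ldots,x_j^n)|^p.$$
To remove the dependence on $T$, I would estimate each scalar by the standard bound for continuous multilinear forms, $|T(x_j^1,\ldots,x_j^n)| \le \|T\|\cdot\|x_j^1\|\cdots\|x_j^n\| \le \|x_j^1\|\cdots\|x_j^n\|$, the last inequality holding because $T \in B_{\mathcal{L}(E_1,\ldots,E_n)}$ forces $\|T\| \le 1$. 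Substituting and taking the supremum over $T$ yields
$$\sup_{T \in B_{\mathcal{L}(E_1,\ldots,E_n)}} \|(T(x_j^1,\ldots,x_j^n))_{j=1}^k\|_{\mathcal{X}(\mathbb{K})} \le \left(\sum_{j=1}^k \left(\|x_j^1\|\cdots\|x_j^n\|\right)^p\right)^{1/p},$$
whose right-hand side is a finite constant depending only on the prescribed finite sequences, proving the claim.

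I do not expect a serious obstacle here, since the argument is essentially a finite-sum estimate. The only points requiring care are to invoke condition (ii) of Definition \ref{defpf} rather than condition (iii) — as we decompose along the canonical vectors $e_j$ and not along the image of a fixed linear operator — and to use the $p$-triangle inequality in the form $\|\sum_j a_j\|_{\mathcal{X}(\mathbb{K})}^p \le \sum_j \|a_j\|_{\mathcal{X}(\mathbb{K})}^p$ rather than the ordinary triangle inequality, since $\mathcal{X}(\mathbb{K})$ is assumed only to be $p$-normed with $0 < p \le 1$.
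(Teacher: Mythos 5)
Your proof is correct and follows essentially the same route as the paper: decompose the finite scalar sequence into its coordinate pieces, apply the $p$-triangle inequality, use condition (ii) of Definition \ref{defpf} to identify each piece's $\mathcal{X}(\mathbb{K})$-norm with the corresponding absolute value, and bound $|T(x_j^1,\ldots,x_j^n)|$ by $\|x_j^1\|\cdots\|x_j^n\|$ since $\|T\|\le 1$. The only cosmetic difference is that you phrase the decomposition via the canonical vectors $e_j$ and homogeneity, whereas the paper applies condition (ii) directly to the sequences $(0,\ldots,0,T(x_j^1,\ldots,x_j^n),0,\ldots)$.
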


\begin{proof} For any $n$-linear form $T \in B_{\mathcal{L}(E_1,\ldots,E_n)}$,  \begin{align*}\|(T(x_j^1,\ldots&,x_j^n))_{j=1}^k\|_{\mathcal{X}(\mathbb{K})}^p=
\|(T(x_1^1,\ldots,x_1^n),0,\ldots,0,\ldots)+(0,T(x_2^1,\ldots,x_2^n),0,\ldots,0,\ldots)\\ &~~~+\cdots+(0,0,\ldots,0,T(x_k^1,\ldots,x_k^n),0,\ldots)\|_{\mathcal{X}(\mathbb{K})}^p \\&\le \|(T(x_1^1,\ldots,x_1^n),0,\ldots,0,\ldots)\|_{\mathcal{X}(\mathbb{K})}^p+ \|(0,T(x_2^1,\ldots,x_2^n),0,\ldots,0,\ldots)\|_{\mathcal{X}(\mathbb{K})}^p\\ &~~~+\cdots+\|(0,0,\ldots,0,T(x_k^1,\ldots,x_k^n),0,\ldots)\|_{\mathcal{X}(\mathbb{K})}^p
\\&=|T(x_1^1,\ldots,x_1^n)|^p+|T(x_2^1,\ldots,x_2^n)|^p+\cdots+|T(x_k^1,\ldots,x_k^n)|^p
\\&\le \|x_1^1\|^p\cdots\|x_1^n\|^p + \|x_2^1\|^p\cdots\|x_2^n\|^p + \cdots + \|x_k^1\|^p\cdots\|x_k^n\|^p.\end{align*}
The result follows because the latter term does not depend on $T$.
\end{proof}

\begin{definition}\label{dmdes}\rm Let $0 < p,q \leq 1$, $\mathcal{X}$ be a $p$-sequence functor and $\mathcal{Y}$ be a $q$-sequence functor. An $n$-linear operator $A\in\mathcal{L}(E_1,\ldots,E_n;F)$ is said to be {\it $(\mathcal{X};\mathcal{Y})$-summing} if there is a constant $C>0$ such that
\begin{equation} \label{eq:md}\|(A(x_j^1,\ldots,x_j^n))_{j=1}^k\|_{\mathcal{Y}(F)}\le C\cdot\sup\limits_{T\in B_{\mathcal{L}(E_1,\ldots,E_n)}} \|(T(x_j^1,\ldots,x_j^n))_{j=1}^k\|_{\mathcal{X}(\mathbb{K})},\end{equation} for every $k \in \mathbb{N}$ and all finite sequences $(x_j^1)_{j=1}^k\subseteq E_1,\ldots,\ (x_j^n)_{j=1}^k\subseteq E_n$. In this case we write $A \in(\mathcal{X};\mathcal{Y})(E_1,\ldots,E_n;F)$ and define $$\|A\|_{(\mathcal{X};\mathcal{Y})}=\inf\{C >0 : C~\mbox{satisfies} \ (\ref{eq:md})\}.$$\end{definition}

To prove that the inequality method generates hyper-ideals we need the following result. The proof is standard and we omit it.

\begin{lema}\label{nlem}Let $\mathcal{X}$ be a $p$-sequence functor, $0 < p \le 1$, and $n\in\mathbb{N}$. If $\sum\limits_{j=1}^\infty x_j^1, \ldots, \sum\limits_{j=1}^\infty x_j^n$ are convergent series in $E$, then the series $\sum\limits_{j=1}^\infty (x_j^1,\ldots,x_j^n,0,0,\ldots)$ converges in $\mathcal{X}(E)$ and
$$\sum\limits_{j=1}^\infty (x_j^1,\ldots,x_j^n,0,0,\ldots)=\ap\sum\limits_{j=1}^\infty x_j^1, \ldots, \sum\limits_{j=1}^\infty x_j^n,0,0,\ldots\fp {\rm ~in~} {\cal X}(E).$$\end{lema}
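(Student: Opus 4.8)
The plan is to realize each finitely supported summand as a finite sum of single-coordinate sequences and to exploit that the coordinate injections are linear isometries. For $i \in \{1,\ldots,n\}$, let $\iota_i \colon E \longrightarrow \mathcal{X}(E)$ be the map sending $x$ to the sequence having $x$ in the $i$-th slot and zeros elsewhere. By Definition \ref{defpf}(ii) this map indeed takes values in $\mathcal{X}(E)$ and satisfies $\|\iota_i(x)\|_{\mathcal{X}(E)}=\|x\|_E$, so each $\iota_i$ is an isometry; and since by Definition \ref{defpf}(i) the space $\mathcal{X}(E)$ carries the coordinatewise (usual) algebraic operations of $E^{\mathbb{N}}$, each $\iota_i$ is linear, hence a continuous linear isometry. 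With this notation, for every $j$ one has, in $\mathcal{X}(E)$, the identity $(x_j^1,\ldots,x_j^n,0,0,\ldots)=\sum_{i=1}^n \iota_i(x_j^i)$.

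Next I would compute the $N$-th partial sum of the series in question. Using the linearity of the maps $\iota_i$ and rearranging the finite double sum, one obtains $\sum_{j=1}^N (x_j^1,\ldots,x_j^n,0,0,\ldots)=\sum_{i=1}^n \iota_i\bigl(\sum_{j=1}^N x_j^i\bigr)$. Since each series $\sum_{j} x_j^i$ converges in $E$, the inner partial sums $\sum_{j=1}^N x_j^i$ converge to $\sum_{j=1}^\infty x_j^i$ as $N \to \infty$; continuity of $\iota_i$ then yields $\iota_i\bigl(\sum_{j=1}^N x_j^i\bigr)\to \iota_i\bigl(\sum_{j=1}^\infty x_j^i\bigr)$ in $\mathcal{X}(E)$. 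Because addition is continuous in the $p$-Banach space $\mathcal{X}(E)$ and the outer sum has only finitely many (namely $n$) terms, the partial sums converge to $\sum_{i=1}^n \iota_i\bigl(\sum_{j=1}^\infty x_j^i\bigr)$, which is precisely the sequence $\bigl(\sum_{j} x_j^1,\ldots,\sum_{j} x_j^n,0,0,\ldots\bigr)$. This simultaneously establishes the convergence of the series and the stated value of its sum.

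There is no serious obstacle here, which is why the authors classify the argument as standard and omit it; the only points deserving a little care are the justifications that each coordinate injection $\iota_i$ is genuinely linear and continuous as a map into $\mathcal{X}(E)$ — both read off directly from conditions (i) and (ii) of Definition \ref{defpf} — and the observation that completeness of $\mathcal{X}(E)$ is not even needed, since the candidate limit is produced explicitly and the convergence reduces to the finitely many limits $\sum_{j=1}^N x_j^i \to \sum_{j=1}^\infty x_j^i$ taking place in $E$. The hypothesis $p \le 1$ plays no essential role beyond guaranteeing that $\mathcal{X}(E)$ is a topological vector space in which addition and the maps $\iota_i$ are continuous, so the same argument works verbatim in the normed case $p=1$.
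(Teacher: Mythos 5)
Your proof is correct and is exactly the standard argument the authors had in mind when they omitted the proof: decompose each finitely supported term via the coordinate injections $\iota_i$, which are linear by Definition \ref{defpf}(i) and isometric (hence continuous) by Definition \ref{defpf}(ii), and then pass to the limit in the $N$-th partial sum using continuity of addition in the $p$-normed space $\mathcal{X}(E)$. Your added observations --- that completeness of $\mathcal{X}(E)$ is not needed since the limit is exhibited explicitly, and that only finitely many coordinate limits are involved --- are accurate and correctly identify why the lemma is routine.
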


\begin{teo}\label{pmdes}Let $0 < p,q \leq 1$ and $\mathcal{Y}$ be a $q$-sequence functor scalarly dominated by the $p$-sequence functor $\mathcal{X}$. Then $((\mathcal{X};\mathcal{Y}),\|\cdot\|_{(\mathcal{X};\mathcal{Y})})$ is a $q$-Banach hyper-ideal.\end{teo}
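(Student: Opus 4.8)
The plan is to verify the three conditions of the criterion in Theorem \ref{criterio}, used with $p$ replaced by $q$, for the pair $((\mathcal{X};\mathcal{Y}),\|\cdot\|_{(\mathcal{X};\mathcal{Y})})$; the criterion then yields at once that this is a $q$-Banach hyper-ideal. Along the way one checks directly that $\|\cdot\|_{(\mathcal{X};\mathcal{Y})}$ is a genuine $q$-norm on each component (homogeneity and positive definiteness are immediate from \eqref{eq:md}, the latter by testing one-term sequences and using Definition \ref{defpf}(a)(ii), while the $q$-triangle inequality descends from that of the $q$-Banach space $\mathcal{Y}(F)$ applied coordinatewise). For condition (i), every $T\in\mathcal{L}({}^n\mathbb{K};\mathbb{K})$ has the form $T(\lambda_1,\ldots,\lambda_n)=c\,\lambda_1\cdots\lambda_n$ with $|c|=\|T\|$, so by absolute homogeneity of $\|\cdot\|_{\mathcal{X}(\mathbb{K})}$ the supremum on the right of \eqref{eq:md} for $I_n$ reduces to $\|(\lambda_j^1\cdots\lambda_j^n)_{j=1}^k\|_{\mathcal{X}(\mathbb{K})}$, while its left-hand side is $\|(\lambda_j^1\cdots\lambda_j^n)_{j=1}^k\|_{\mathcal{Y}(\mathbb{K})}$. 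The scalar domination of $\mathcal{Y}$ by $\mathcal{X}$ (Definition \ref{defpf}(b)) then gives \eqref{eq:md} with $C=1$, whence $\|I_n\|_{(\mathcal{X};\mathcal{Y})}\le 1$; testing the one-term sequence $\lambda_1^1=\cdots=\lambda_1^n=1$ and using Definition \ref{defpf}(a)(ii) forces $\|I_n\|_{(\mathcal{X};\mathcal{Y})}\ge 1$.

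For condition (iii), fix $A\in(\mathcal{X};\mathcal{Y})(E_1,\ldots,E_n;F)$, operators $B_1,\ldots,B_n$ and $t$ as in the hyper-ideal property, and finite sequences $(y_j^l)_{j=1}^k$ in the $G_l$. Setting $z_j^i:=B_i(y_j^{m_{i-1}+1},\ldots,y_j^{m_i})$, one has $(t\circ A\circ(B_1,\ldots,B_n))(y_j^1,\ldots,y_j^{m_n})=t(A(z_j^1,\ldots,z_j^n))$. I would then chain three estimates: Definition \ref{defpf}(a)(iii) applied to $t$, the summing inequality \eqref{eq:md} for $A$, and the identity $S(z_j^1,\ldots,z_j^n)=(S\circ(B_1,\ldots,B_n))(y_j^1,\ldots,y_j^{m_n})$ together with the normalization of $S\circ(B_1,\ldots,B_n)$ by $\|B_1\|\cdots\|B_n\|$ (which places it in $B_{\mathcal{L}(G_1,\ldots,G_{m_n})}$ when this product is nonzero). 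This simultaneously shows $t\circ A\circ(B_1,\ldots,B_n)\in(\mathcal{X};\mathcal{Y})$ and the hyper-ideal inequality \eqref{eqhi}; the degenerate case where some $B_i$ or $t$ vanishes is trivial. This paragraph is where the genuinely multilinear (hyper-ideal, as opposed to multi-ideal) structure is used, via composition of the $B_i$.

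Condition (ii) is the substantive part. Given $(A_m)_{m=1}^\infty\subseteq(\mathcal{X};\mathcal{Y})(E_1,\ldots,E_n;F)$ with $\sum_m\|A_m\|_{(\mathcal{X};\mathcal{Y})}^q<\infty$, testing \eqref{eq:md} on one-term sequences gives $\|A_m\|\le\|A_m\|_{(\mathcal{X};\mathcal{Y})}$, and since $q\le 1$ the inequality $a\le a^q$ for $0\le a\le 1$ upgrades $q$-summability to $\sum_m\|A_m\|_{(\mathcal{X};\mathcal{Y})}<\infty$; hence $A:=\sum_m A_m$ converges in the Banach space $\mathcal{L}(E_1,\ldots,E_n;F)$. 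Fixing finite sequences $(x_j^i)_{j=1}^k$ and writing $M:=\sup_{T\in B_{\mathcal{L}(E_1,\ldots,E_n)}}\|(T(x_j^1,\ldots,x_j^n))_{j=1}^k\|_{\mathcal{X}(\mathbb{K})}$, the $q$-triangle inequality in $\mathcal{Y}(F)$ bounds the partial sums $S_N:=\sum_{m=1}^{N}(A_m(x_j^1,\ldots,x_j^n))_{j=1}^k$ by $\|S_N\|_{\mathcal{Y}(F)}^q\le(\sum_{m=1}^{N}\|A_m\|_{(\mathcal{X};\mathcal{Y})}^q)\,M^q$.

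The step I expect to be most delicate, and the heart of the argument, is to identify the limiting behaviour of $S_N$ with the actual sequence $(A(x_j^1,\ldots,x_j^n))_{j=1}^k$: a priori, convergence in $\mathcal{Y}(F)$ need not control the individual coordinates, so one cannot naively pass coordinatewise limits through the norm. This is precisely where Lemma \ref{nlem} intervenes. For each fixed $l\le k$ the series $\sum_m A_m(x_l^1,\ldots,x_l^n)$ is absolutely convergent in $F$, being dominated by $\sum_m\|A_m\|_{(\mathcal{X};\mathcal{Y})}\|x_l^1\|\cdots\|x_l^n\|$, with sum $A(x_l^1,\ldots,x_l^n)$; applying Lemma \ref{nlem} to $\mathcal{Y}$ with these $k$ convergent series then shows $S_N\to(A(x_j^1,\ldots,x_j^n))_{j=1}^k$ in $\mathcal{Y}(F)$. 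Continuity of the $q$-norm upgrades the partial-sum bound to $\|(A(x_j^1,\ldots,x_j^n))_{j=1}^k\|_{\mathcal{Y}(F)}^q\le(\sum_{m=1}^{\infty}\|A_m\|_{(\mathcal{X};\mathcal{Y})}^q)\,M^q$, which is \eqref{eq:md} for $A$ with $C^q=\sum_m\|A_m\|_{(\mathcal{X};\mathcal{Y})}^q$. Thus $A\in(\mathcal{X};\mathcal{Y})$ and $\|A\|_{(\mathcal{X};\mathcal{Y})}^q\le\sum_m\|A_m\|_{(\mathcal{X};\mathcal{Y})}^q$, establishing condition (ii); with (i)--(iii) verified, Theorem \ref{criterio} finishes the proof.
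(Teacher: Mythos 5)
Your proposal is correct and follows essentially the same route as the paper's proof: condition (i) via scalar domination and a one-term test sequence, condition (ii) via the pointwise bound $\|A_m\|\le\|A_m\|_{(\mathcal{X};\mathcal{Y})}$, the $q$-triangle inequality on partial sums, and Lemma \ref{nlem} to identify the limit in $\mathcal{Y}(F)$, and condition (iii) via normalizing the $B_i$ and using Definition \ref{defpf}(iii) for $t$. The only differences are cosmetic (you spell out the $q$-norm axioms and the structure of $n$-linear forms on $\mathbb{K}^n$, which the paper treats as routine).
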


\begin{proof} We prove that the conditions of Theorem \ref{criterio} are fulfilled. \\
(i) It is routine to prove that, for each $n\in \mathbb{N}$,  $I_n\in(\mathcal{X};\mathcal{Y})(^n\mathbb{K};\mathbb{K})$. As $I_n \in B_{{\cal L}(^n\mathbb{K};\mathbb{K})}$ and $\cal Y$ is scalarly dominated by $\cal X$, we get  $\|I_n\|_{(\mathcal{X};\mathcal{Y})}\leq 1$. Let $C$ be a constant working in (\ref{eq:md}) for $I_n$. Choosing $k =  x_1^1, \ldots, x_n^1 =1$ we obtain $C \geq 1$, from which follows $\|I_n\|_{(\mathcal{X};\mathcal{Y})}= 1$.  \\
(ii) Let $(A_i)_{i=1}^\infty\subseteq (\mathcal{X};\mathcal{Y})(E_1,\ldots,E_n;F)$ be such that $\sum\limits_{i=1}^\infty \|A_i\|_{(\mathcal{X};\mathcal{Y})}^q<\infty$. We are supposed to show that
\begin{equation}\label{eqseries} A:=\sum\limits_{i=1}^\infty A_i \in (\mathcal{X};\mathcal{Y})(E_1,\ldots,E_n;F) {\rm ~~and~~} \|A\|_{(\mathcal{X};\mathcal{Y})}^q\le\sum\limits_{i=1}^\infty \|A_i\|_{(\mathcal{X};\mathcal{Y})}^q.\end{equation}
First note that, for each operator $B \in (\mathcal{X};\mathcal{Y})(E_1,\ldots,E_n;F)$ and all $x_j \in E_j$, $j = 1, \ldots, n$, we have  \begin{eqnarray*}\|B(x_1,\ldots,x_n)\|_F&=&\|(B(x_1,\ldots,x_n),0,\ldots,0,\ldots)\|_{\mathcal{Y}(F)}\\&\le& C\cdot\sup\limits_{T\in B_{\mathcal{L}(E_1,\ldots,E_n)}}\|(T(x_1,\ldots,x_n),0,\ldots,0,\ldots)\|_{\mathcal{X}(\mathbb{K})}\\&=&
C\cdot\sup\limits_{T\in B_{\mathcal{L}(E_1,\ldots,E_n)}}|T(x_1,\ldots,x_n)|\le C\cdot\|x_1\|\cdots\|x_n\|,\end{eqnarray*}
for each constant $C$ working in (\ref{eq:md}) for $B$. It follows that $\|B\|\le\|B\|_{(\mathcal{X};\mathcal{Y})}$. Hence, as $q \leq 1$, the series $\sum\limits_{i=1}^\infty A_i$ is absolutely convergent in the Banach space ${\cal L}(E_1, \ldots, E_n;F)$, therefore convergent, say $A:=\sum\limits_{i=1}^\infty A_i \in {\cal L}(E_1, \ldots, E_n;F)$. Given $k \in \mathbb{N}$ and finite sequences $(x_j^1)_{j=1}^k\subseteq E_1,\ldots,\ (x_j^n)_{j=1}^k\subseteq E_n$, Lemma \ref{nlem} gives \begin{align*}\|(A(x_j^1,\ldots,x_j^n))&_{j=1}^k\|_{\mathcal{Y}(F)}^q= \an\ap\sum\limits_{i=1}^\infty A_i(x_j^1,\ldots,x_j^n)\fp_{j=1}^k\fn_{\mathcal{Y}(F)}^q\\&=\an\sum\limits_{i=1}^\infty (A_i(x_j^1,\ldots,x_j^n))_{j=1}^k\fn_{\mathcal{Y}(F)}^q\le \sum\limits_{i=1}^\infty \|(A_i(x_j^1,\ldots,x_j^n))_{j=1}^k\|_{\mathcal{Y}(F)}^q \\&= \ap\sum\limits_{i=1}^\infty \|A_i\|_{(\mathcal{X};\mathcal{Y})}^q\fp\cdot\ap\sup\limits_{T\in B_{\mathcal{L}(E_1,\ldots,E_n)}} \|(T(x_j^1,\ldots,x_j^n))_{j=1}^k\|_{\mathcal{X}(\mathbb{K})}\fp^q,\end{align*}
proving (\ref{eqseries}).\\
(iii) Let $\le m_1<\cdots<m_n$ be naturals numbers, $A \in (\mathcal{X};\mathcal{Y})(E_1,\ldots,E_n;F)$, $B_1\in \mathcal{L}(G_1,\ldots, G_{m_1};E_1)$, $\ldots, B_n\in \mathcal{L}(G_{m_{n-1}+1},\ldots, G_{m_n};E_n)$ and $t\in\mathcal{L}(F;H)$. Of course we can assume that $B_l\neq0$ for $l=1,\ldots,n$. For every $T\in B_{\mathcal{L}(E_1,\ldots,E_n)}$,  $$T\circ\ap\frac{B_1}{\|B_1\|},\ldots,\frac{B_n}{\|B_n\|}\fp \in B_{\mathcal{L}(G_1,\ldots,G_{m_n})},$$
thus
\begin{align*}&
\|(t\circ A \circ(B_1,\ldots,B_n)(x_j^1,\ldots,x_j^n))_{j=1}^k\|_{\mathcal{Y}(H)} \\&\le\|t\|\cdot\|(A(B_1(x_j^1,\ldots,x_j^{m_1}),\ldots,B_n(x_j^{m_{n-1}+1},\ldots,x_j^{m_n})))_{j=1}^k\|_{\mathcal{Y}(F)}\\&\le
\|t\|\cdot\|A\|_{(\mathcal{X};\mathcal{Y})}\cdot \sup\limits_{T\in B_{\mathcal{L}(E_1,\ldots,E_n)}}\|(T(B_1(x_j^1,\ldots,x_j^{m_1}),\ldots, B_n(x_j^{m_{n-1}+1},\ldots,x_j^{m_n})))_{j=1}^k\|_{\mathcal{X}(\mathbb{K})}\\&=
\|t\|\cdot\|A\|_{(\mathcal{X};\mathcal{Y})}\cdot\|B_1\|\cdots\|B_n\|\cdot\\& {  } ~~~~~~~\sup\limits_{T\in B_{\mathcal{L}(E_1,\ldots,E_n)}}\an\ap T\ap \frac{B_1}{\|B_1\|}(x_j^1,\ldots,x_j^{m_1}),\ldots, \frac{B_n}{\|B_n\|}(x_j^{m_{n-1}+1},\ldots,x_j^{m_n})\fp\fp_{j=1}^k\fn_{\mathcal{X}(\mathbb{K})}
\\&\le\|t\|\cdot\|A\|_{(\mathcal{X};\mathcal{Y})}\cdot\|B_1\|\cdots\|B_n\| \cdot \sup\limits_{S\in B_{\mathcal{L}(G_1,\ldots,G_{m_n})}} \|(S(x_j^1,\ldots,x_j^{m_n}))_{j=1}^k\|_{\mathcal{X}(\mathbb{K})},\end{align*}
where the first inequality follows from condition \ref{defpf}(iii).
It follows that $t\circ A \circ(B_1,\ldots,B_n)\in(\mathcal{X};\mathcal{Y})(G_1,\ldots,G_{m_n};H)$ and $$\|t\circ A \circ(B_1,\ldots,B_n)\|_{(\mathcal{X};\mathcal{Y})}\le \|t\|\cdot\|A\|_{(\mathcal{X};\mathcal{Y})}\cdot\|B_1\|\cdots\|B_n\|.$$\end{proof}

Next we provide two examples of hyper-ideals generated by the inequality method.

In the first example we show that the inequality method recovers an important well studied class as a particular instance. The class of dominated multilinear operators was introduced by Pietsch \cite{pietsch} as a first attempt to generalize the classical ideal of absolutely summing linear operators to the multilinear setting. Although several other classes of {\it absolutely summing multilinear operators} have appeared, the class of dominated multilinear operators keeps being studied to this day. Among other recent developments, Popa \cite{popass} proved that the class of dominated multilinear operators fails to be a hyper-ideal. It is a natural question to ask if there is room in the realm of Banach hyper-ideals for a multilinear generalization of the Banach ideal of absolutely $p$-summing linear operators. We found the answer in the following class introduced by Dimant \cite{dimant}:

\begin{definition}\rm For $0< p<\infty$, an $n$-linear operator $A\in\mathcal{L}(E_1,\ldots,E_n;F)$ is {\it strongly $p$-summing} if there is $C>0$ such that \begin{equation} \label{eq:ssp}\ap\sum\limits_{i=1}^k\|A(x_i^{1},\ldots,x_i^{n})\|^p\fp^{1/p}\le C\cdot\sup\limits_{T\in B_{\mathcal{L}(E_1,\ldots,E_n)}}\ap\sum\limits_{i=1}^k|T(x_i^{1},\ldots,x_i^{n})|^{p}\fp^{1/p}\end{equation}
for all $(x_i^{l})_{i=1}^k\subseteq E_l$, $l=1,\ldots,n$, $k \in \mathbb{N}$. In this case we write $A\in\mathcal{L}_{ss}^p(E_1,\ldots,E_n;F)$ and define
$$\|A\|_{ss,p}=\inf\{C >0: C \ \mbox{satisfies}\ (\ref{eq:ssp})\}.$$\end{definition}

It is clear that the linear component of $\mathcal{L}_{ss}^p$ recovers the ideal of absolutely $p$-summing linear operators. Making ${\cal Y}(E) = {\cal X}(E) = \ell_p(E)$ for every Banach space $E$ in  Theorem \ref{pmdes}, we obtain:

\begin{cor}\label{pssp}The class $(\mathcal{L}_{ss}^p,\|\cdot\|_{ss,p})$ of strongly $p$-summing multilinear operators is a $p$-Banach hyper-ideal for $0<p<1$ and a Banach hyper-ideal for $p \geq 1$.\end{cor}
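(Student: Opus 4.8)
The plan is to recognize $\mathcal{L}_{ss}^p$ as the special instance of the inequality method obtained by taking $\mathcal{X}(E)=\mathcal{Y}(E)=\ell_p(E)$, and then to invoke Theorem~\ref{pmdes}. First I would check that, under this choice, the defining inequality (\ref{eq:md}) of $(\mathcal{X};\mathcal{Y})$-summing operators is \emph{literally} the inequality (\ref{eq:ssp}) defining strongly $p$-summing operators. Indeed, $\|(A(x_j^1,\ldots,x_j^n))_{j=1}^k\|_{\ell_p(F)}=\left(\sum_{j=1}^k\|A(x_j^1,\ldots,x_j^n)\|^p\right)^{1/p}$ and $\|(T(x_j^1,\ldots,x_j^n))_{j=1}^k\|_{\ell_p(\mathbb{K})}=\left(\sum_{j=1}^k|T(x_j^1,\ldots,x_j^n)|^p\right)^{1/p}$, so (\ref{eq:md}) and (\ref{eq:ssp}) coincide term by term. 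Hence $(\ell_p;\ell_p)(E_1,\ldots,E_n;F)=\mathcal{L}_{ss}^p(E_1,\ldots,E_n;F)$ as sets, and the two infima defining $\|\cdot\|_{(\ell_p;\ell_p)}$ and $\|\cdot\|_{ss,p}$ agree. This identification reduces the corollary to verifying the hypotheses of Theorem~\ref{pmdes} for $\mathcal{X}=\mathcal{Y}=\ell_p$.

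Next I would confirm those hypotheses. By Example~\ref{fex}(a), the correspondence $E\longmapsto(\ell_p(E),\|\cdot\|_p)$ is a $p$-sequence functor for every $p>0$, and is a genuine sequence functor, i.e. a $1$-sequence functor, whenever $p\ge1$. The scalar domination required in Definition~\ref{defpf}(b) is immediate here: since $\mathcal{X}$ and $\mathcal{Y}$ are the same functor, the inequality $\|(\lambda_j)_{j=1}^k\|_{\mathcal{X}(\mathbb{K})}\le\|(\lambda_j)_{j=1}^k\|_{\mathcal{Y}(\mathbb{K})}$ holds trivially, with equality. Thus all the hypotheses of Theorem~\ref{pmdes} are in force.

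It then remains only to read off the correct exponent, splitting into two cases. For $0<p<1$, I would apply Theorem~\ref{pmdes} with both $\mathcal{X}$ and $\mathcal{Y}$ viewed as $p$-sequence functors, so that the theorem's parameters both equal $p$ (admissible since $0<p\le1$); this yields that $(\mathcal{L}_{ss}^p,\|\cdot\|_{ss,p})$ is a $p$-Banach hyper-ideal. For $p\ge1$, the one point requiring care is that $\ell_p$ is now a sequence functor, i.e. a $1$-sequence functor, so Theorem~\ref{pmdes} must be applied with its parameters equal to $1$ rather than to the summability exponent $p$; this gives a $1$-Banach hyper-ideal, that is, a Banach hyper-ideal.

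I expect no genuine obstacle, since the corollary is a direct specialization of Theorem~\ref{pmdes}; the only thing to keep track of is the clash between the summability exponent $p$ built into the definition of $\mathcal{L}_{ss}^p$ and the sequence-functor exponent demanded by Theorem~\ref{pmdes}. These coincide on $(0,1)$ but diverge for $p\ge1$, where the latter drops to $1$, which is precisely what turns the expected ``$p$-Banach'' conclusion into the stronger ``Banach'' conclusion.
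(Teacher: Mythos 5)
Your proposal is correct and is exactly the paper's argument: the paper obtains the corollary precisely by setting $\mathcal{X}(E)=\mathcal{Y}(E)=\ell_p(E)$ in Theorem~\ref{pmdes}. You merely make explicit the identification of the norms and the exponent bookkeeping (the target functor is a $p$-sequence functor for $0<p<1$ and a $1$-sequence functor for $p\ge1$), which the paper leaves implicit.
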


The second example of a hyper-ideal generated by the inequality method is a new class. The ideal of almost summing linear operators was introduced in \cite{diestel} and several classes of {\it almost summing multilinear operators} have been studied, see, e.g. \cite{bnachr, bbj, danieljoilson, danielmarcela, popaarchiv}. Such classes often fail to be hyper-ideals:

\begin{ex}\rm Let ${\cal L}_{al.s}$ denote the class of almost summing multilinear operators introduced in \cite{bnachr, bbj}. By \cite[Example 4.3(2)]{bnachr}, there are multilinear operators of finite rank not belonging to ${\cal L}_{al.s}$, so ${\cal L}_{al.s}$ fails to be a hyper-ideal by Remark \ref{rema}.
 \end{ex}

Our second example is a hyper-ideal generated by the inequality method that generalizes the ideal of almost summing linear operators to the multilinear setting. Remember that $(Rad(\cdot), \|\cdot\|_{Rad(E)})$ denotes the sequence functor of almost unconditionally summable sequences (cf. Example \ref{fex}(b)).

\begin{definition}\rm Let $p\ge1$. We say that an $n$-linear operator $A\in \mathcal{L}(E_1,\ldots,E_n;F)$ is {\it strongly almost $p$-summing} if there is a constant $C>0$ such that, for all $k\in\mathbb{N}$ and $(x_j^1)_{j=1}^k\subseteq E_1,\ldots, (x_j^n)_{j=1}^k\subseteq E_n$, \begin{equation}\label{eq:fqps}\left\| (A(x_j^1,\ldots,x_j^n))_{j=1}^k\right\|_{Rad(F)}\le C\cdot\sup\limits_{T\in B_{\mathcal{L}(E_1,\ldots,E_n)}} \ap\sum\limits_{j=1}^k|(T(x_j^1,\ldots,x_j^n))_{j=1}^k|^p\fp^{1/p}.\end{equation} In this case we write $A\in\mathcal{L}_{sas,p}(E_1,\ldots,E_n;F)$ and define  $$\|A\|_{\mathcal{L}_{sas,p}}=\inf\{C >0: C \ \mbox{satisfies} \ \eqref{eq:fqps}\}.$$\end{definition}

\begin{cor}\label{sas}If $0<p\le2$, then $(\mathcal{L}_{sas,p},\|\cdot\|_{\mathcal{L}_{sas,p}})$ is an Banach hyper-ideal.\end{cor}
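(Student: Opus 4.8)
The plan is to recognize the class $\mathcal{L}_{sas,p}$ as an instance of the inequality method and then to invoke Theorem~\ref{pmdes}. Comparing the defining inequality \eqref{eq:fqps} with the general inequality \eqref{eq:md}, I read off the sequence functors $\mathcal{Y}(E) = Rad(E)$ on the left-hand side and $\mathcal{X}(E) = \ell_p(E)$ on the right-hand side, since the right-hand side of \eqref{eq:fqps} is nothing but $\sup_{T \in B_{\mathcal{L}(E_1,\ldots,E_n)}} \left\|(T(x_j^1,\ldots,x_j^n))_{j=1}^k\right\|_{\ell_p(\mathbb{K})}$. This identifies $\mathcal{L}_{sas,p}$ with $(\ell_p(\cdot);Rad(\cdot))$ as classes, with $\|\cdot\|_{\mathcal{L}_{sas,p}} = \|\cdot\|_{(\ell_p;Rad)}$.

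Next I would verify the hypotheses of Theorem~\ref{pmdes}. By Example~\ref{fex}(b), $Rad(\cdot)$ is a sequence functor, so the parameter $q$ equals $1$; by Example~\ref{fex}(a), $\ell_p(\cdot)$ is a $p$-sequence functor when $0 < p \le 1$ and a sequence functor (hence a $1$-sequence functor) when $1 \le p \le 2$, so in either case it plays the role of $\mathcal{X}$ with a parameter in $(0,1]$. Since $q = 1$, the conclusion of Theorem~\ref{pmdes} will be that $(\ell_p;Rad)$ is a $1$-Banach, i.e. Banach, hyper-ideal, exactly as claimed.

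The one substantive point is the scalar domination of $Rad(\cdot)$ by $\ell_p(\cdot)$, that is, the inequality $\left\|(\lambda_j)_{j=1}^k\right\|_{Rad(\mathbb{K})} \le \left\|(\lambda_j)_{j=1}^k\right\|_{\ell_p(\mathbb{K})}$ for every finite scalar sequence. Here I would exploit the orthonormality of the Rademacher functions in $L^2[0,1]$: expanding $\left|\sum_{j=1}^k r_j(t)\lambda_j\right|^2$ and integrating term by term (splitting into real and imaginary parts in the complex case, as the $r_j$ are real-valued) collapses all cross terms and yields $\left\|(\lambda_j)_{j=1}^k\right\|_{Rad(\mathbb{K})} = \left(\sum_{j=1}^k |\lambda_j|^2\right)^{1/2} = \left\|(\lambda_j)_{j=1}^k\right\|_{\ell_2(\mathbb{K})}$. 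The required domination thus reduces to $\|\cdot\|_{\ell_2} \le \|\cdot\|_{\ell_p}$ on finite scalar sequences, which is the elementary inclusion $\ell_p \hookrightarrow \ell_2$ of norm at most $1$, valid precisely when $0 < p \le 2$. This is where, and the only place where, the hypothesis $p \le 2$ enters.

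With the scalar domination established, Theorem~\ref{pmdes} applies directly with $\mathcal{X} = \ell_p(\cdot)$, $\mathcal{Y} = Rad(\cdot)$ and $q = 1$, giving that $(\mathcal{L}_{sas,p}, \|\cdot\|_{\mathcal{L}_{sas,p}})$ is a Banach hyper-ideal. I do not anticipate any real obstacle beyond this scalar computation; the main care is simply to confirm that the right-hand side of \eqref{eq:fqps} is literally the $\ell_p(\mathbb{K})$-quantity above, so that the identification $\mathcal{L}_{sas,p} = (\ell_p;Rad)$ is exact and the abstract theorem can be quoted verbatim.
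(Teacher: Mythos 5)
Your proof is correct and takes essentially the same route as the paper's: both identify $\mathcal{L}_{sas,p}$ with the class of $(\ell_p(\cdot);Rad(\cdot))$-summing operators and invoke Theorem~\ref{pmdes}, reducing the scalar domination to the facts that $Rad(\mathbb{K})=\ell_2$ isometrically and $\|\cdot\|_{\ell_2}\le\|\cdot\|_{\ell_p}$ for $0<p\le 2$. You merely make explicit the Rademacher orthonormality computation and the observation that $\ell_p(\cdot)$ serves as a sequence functor with parameter $\min\{p,1\}\in(0,1]$, both of which the paper leaves implicit.
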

\begin{proof}Note that $\mathcal{L}_{sas,p}$ is precisely the class of $(\ell_p(\cdot);Rad(\cdot))$-summing multilinear operators. As $\ell_p(\cdot)$ is a $p$-sequence functor, $Rad(\cdot)$ is a sequence functor and $Rad(\cdot)$ is scalarly dominated by $\ell_p(\cdot)$ (remember that $Rad(\mathbb{K}) = \ell_2$ isometrically and $0<p\le2$), the result follows from Theorem \ref{pmdes}.\end{proof}

\begin{obs}\rm A method to generate multi-ideals (not hyper-ideals), related to the method introduced in this section, is presented in \cite{diana}. However, an important assumption is missing there. More precisely, for Theorem \cite[Theorem 3]{diana} to be true, the linear stability of the underlying sequence spaces is required, that is, condition \ref{defpf}(iii) must be added to the assumptions of \cite[Theorem 3]{diana}.\end{obs}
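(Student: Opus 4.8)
The statement is a necessity claim: it asserts that without the linear stability condition \ref{defpf}(iii) the conclusion of \cite[Theorem 3]{diana} (that the associated class is a multi-ideal, with its multi-ideal inequality) is false, and that adding \ref{defpf}(iii) repairs it. The plan is therefore twofold. First I would pin down the single point at which linear stability is actually used, and then I would exhibit an explicit pair of sequence functors violating \ref{defpf}(iii) for which the multi-ideal inequality fails, while every other standing hypothesis is met.

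For the first half I would simply read off the proof of Theorem \ref{pmdes}. In part (iii) of that proof, the very first inequality, namely $\|(t\circ A\circ(B_1,\ldots,B_n)(x_j^1,\ldots,x_j^n))_{j=1}^k\|_{\mathcal{Y}(H)}\le\|t\|\cdot\|(A(\ldots))_{j=1}^k\|_{\mathcal{Y}(F)}$, is nothing but condition \ref{defpf}(iii) applied to the \emph{output} functor $\mathcal{Y}$ and to the linear operator $t\in\mathcal{L}(F;H)$. This is the only place where stability under a linear map acting on the left enters, and it is exactly the step that produces the multi-ideal inequality. Hence, to defeat \cite[Theorem 3]{diana} it suffices to defeat \ref{defpf}(iii) for $\mathcal{Y}$.

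The hard part, and the main obstacle, is producing such a functor, since every classical sequence space satisfies \ref{defpf}(iii): their norms are governed by the vectors $x_j$ and by dual functionals, objects on which a norm-one operator acts as a contraction, so the norm can only decrease. The counterexample must therefore be genuinely artificial, defined unequally on non-isometric spaces. I would take $\mathcal{X}=\mathcal{Y}=\mathcal{Z}$, where $\mathcal{Z}(E)=\ell_1(E)$ when $E$ is isometrically a Hilbert space and $\mathcal{Z}(E)=c_0(E)$ otherwise. This correspondence satisfies \ref{defpf}(i)--(ii) and, being equal to itself, the scalar domination of Definition \ref{defpf}(b), but not \ref{defpf}(iii): fixing $d\ge 2$ and the contraction $t=\tfrac{1}{\sqrt d}\,\mathrm{id}\colon\ell_\infty^d\to\ell_2^d$ (of norm $1$), the sequence consisting of $m$ copies of each unit vector $e_1,\ldots,e_d$ gives $\|(t e_i)\|_{\mathcal{Z}(\ell_2^d)}=\tfrac{1}{\sqrt d}\cdot dm=\sqrt d\,m$ while $\|(e_i)\|_{\mathcal{Z}(\ell_\infty^d)}=1$, so \ref{defpf}(iii) fails, increasingly so as $d\to\infty$.

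Finally I would convert this into the failure of the multi-ideal inequality. Taking $A=\mathrm{id}_{\ell_\infty^d}$, a one-line computation with norming functionals (using $\mathcal{Z}(\mathbb{K})=\ell_1$) gives $A\in(\mathcal{Z};\mathcal{Z})(\ell_\infty^d;\ell_\infty^d)$ with $\|A\|_{(\mathcal{Z};\mathcal{Z})}\le 1$; composing on the left with the contraction $t$ above, so that $t\circ A=t$, and testing on the same sequence forces every admissible constant to be at least $\sqrt d$, whence $\|t\circ A\|_{(\mathcal{Z};\mathcal{Z})}\ge\sqrt d>1\ge\|t\|\cdot\|A\|_{(\mathcal{Z};\mathcal{Z})}$. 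This contradicts the multi-ideal inequality, so the conclusion of \cite[Theorem 3]{diana} is false as stated. Conversely, once \ref{defpf}(iii) is assumed, the multi-ideal case is precisely the specialization of Theorem \ref{pmdes} to $m_1=1,\ldots,m_n=n$ with linear $B_1,\ldots,B_n$, so the amended statement holds. As indicated, the delicate point is the construction of the artificial functor $\mathcal{Z}$ together with the verification that it meets every hypothesis except the one whose necessity is being demonstrated.
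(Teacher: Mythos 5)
The paper offers no proof of this remark at all: it is a bare assertion, whose only internal support is the flag, in step (iii) of the proof of Theorem \ref{pmdes}, that the first inequality there (absorbing the left factor $t$ at the level of $\mathcal{Y}(H)$-norms) ``follows from condition \ref{defpf}(iii)''. Your proposal therefore does strictly more than the paper: you correctly locate that same left-composition step as the one place where linear stability enters, and -- recognizing that pointing at a step in a sufficiency proof does not establish that the hypothesis is indispensable -- you supply the counterexample the paper never gives. I checked your example and it works: the correspondence $\mathcal{Z}$ ($\ell_1(\cdot)$ on isometrically Hilbert spaces, $c_0(\cdot)$ elsewhere) satisfies \ref{defpf}(i)--(ii) and trivially dominates itself scalarly; since $\mathcal{Z}(\ell_\infty^d)=c_0(\ell_\infty^d)$ carries the sup norm and $\mathcal{Z}(\mathbb{K})=\ell_1$, a norming-functional argument gives $\|\mathrm{id}_{\ell_\infty^d}\|_{(\mathcal{Z};\mathcal{Z})}\le 1$; the operator $t=d^{-1/2}\,\mathrm{id}\colon \ell_\infty^d\to\ell_2^d$ has norm $1$; and testing $t\circ \mathrm{id}_{\ell_\infty^d}=t$ on $m$ copies of each $e_1,\ldots,e_d$ forces any constant in (\ref{eq:md}) to be at least $\sqrt{d}\,m\,/\,m=\sqrt d$, so the ideal inequality $\|t\circ A\|_{(\mathcal{Z};\mathcal{Z})}\le\|t\|\cdot\|A\|_{(\mathcal{Z};\mathcal{Z})}$ fails already at the linear ($n=1$) component. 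The choice $n=1$ is also shrewd: for linear operators the supremum over $B_{\mathcal{L}(E_1)}=B_{E_1'}$ coincides with any coordinatewise weak-norm formulation, so the example is insensitive to the exact shape of the defining inequality used in \cite{diana}, which the present paper does not reproduce.

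Two caveats, neither fatal. First, since the axioms of \cite{diana} are not stated in this paper, neither you nor a reader of this paper alone can certify that $\mathcal{Z}$ meets every hypothesis of \cite[Theorem 3]{diana}; what your example rigorously proves is the necessity of \ref{defpf}(iii) within the paper's own framework (Definition \ref{defpf} with (iii) deleted, plus scalar domination), which is the only reading of the remark the paper makes available. Second, your closing claim -- that with (iii) restored the multi-ideal statement is just the specialization of Theorem \ref{pmdes} to $m_1=1,\ldots,m_n=n$ -- is literally valid only if the class of \cite{diana} coincides with $(\mathcal{X};\mathcal{Y})$; if its right-hand side differs for $n\ge 2$ (e.g., a product of coordinatewise weak norms), the repair consists of inserting (iii) at the left-composition step of the proof in \cite{diana}, which is exactly the step you identified. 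Note finally that your example defeats the \emph{inequality} but not membership ($t$ still lies in the class, with constant $\sqrt d$); since \cite[Theorem 3]{diana} asserts a \emph{Banach} multi-ideal, that is enough.
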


\section{The boundedness method}\label{action}
As mentioned in the Introduction, some methods of generating multi-ideals starting with a given operator ideal are not effective to generate hyper-ideals (cf. Example \ref{ex1}). In this section, inspired by the polynomial case studied in \cite{aronrueda2}, we introduce a method to generate hyper-ideals starting with a given operator ideal.\\
\indent The notion of $\cal I$-bounded sets, where $\cal I$ is an operator ideal, was introduced by Stephani \cite{stephani}; recent developments can be found, e.g.,  in  \cite{aronrueda2, aronrueda1, gg1, lassalleturco}.

\begin{definition}\rm Let $\mathcal{I}$ be an operator ideal. A subset $K$ of a Banach space $F$ is said to be {\it $\mathcal{I}$-bounded} is there are a Banach space $H$ and an operator $u\in\mathcal{I}(H;F)$ such that $K\subseteq u(B_H).$ The collection of all $\cal I$-bounded subsets of $F$ is denoted by $C_{\mathcal{I}}(F)$.\end{definition}

Aron and Rueda \cite{aronrueda2} used the concept of $\cal I$-bounded set to define an ideal of homogeneous polynomials. In this section we show that, proceeding for multilinear operators as Aron and Rueda proceeded for polynomials, we end up with a Banach hyper-ideal.

\begin{definition}\rm Let $(\mathcal{I}, \|\cdot\|_{\cal I})$ be a $p$-normed operator ideal, $0 < p \leq 1.$ We say that a multilinear operator $A\in\mathcal{L}(E_1,\ldots,E_n;F)$ is {\it $\mathcal{I}$-bounded} if $A(B_{E_1}\times\cdots\times B_{E_n})\in C_{\mathcal{I}}(F)$, that is, if
there are a Banach space $H$ and an operator $u\in\mathcal{I}(H;F)$ such that \begin{equation}\label{eq:milim}A(B_{E_1}\times\cdots\times B_{E_n}) \subseteq u(B_H).\end{equation}
In this case we write $A\in{\cal L}_{\langle\mathcal{I}\rangle}(E_1,\ldots,E_n;F)$ and define
$$\|A\|_{\mathcal{L}_{\langle\mathcal{I}\rangle}}=\inf\{\|u\|_{\mathcal{I}}: \ u \ \mbox{satisfies} \ \eqref{eq:milim}\}.$$
\end{definition}

\begin{teo}\label{proplimhi}Let $0 < p \leq 1$ and $(\mathcal{I},\|\cdot\|_\mathcal{I})$ be a $p$-normed ($p$-Banach) operator ideal. Then $(\mathcal{L}_{\langle\mathcal{I}\rangle},\|\cdot\|_{\mathcal{L}_{\langle\mathcal{I}\rangle}})$ is a $p$-normed ($p$-Banach) hyper-ideal.\end{teo}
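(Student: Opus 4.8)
The plan is to verify the axioms of a $p$-normed hyper-ideal directly, and then obtain completeness in the $p$-Banach case by checking condition (ii) of Theorem \ref{criterio}. The one geometric fact that drives everything is that a bounded multilinear operator $B$ sends the product of the closed unit balls into the closed ball of radius $\|B\|$ of its target; hence, by multilinearity, precomposing $A$ with operators $B_1,\ldots,B_n$ scales the set $A(B_{E_1}\times\cdots\times B_{E_n})$ by the factor $\|B_1\|\cdots\|B_n\|$, while postcomposing with $t$ turns a dominating operator $u\in\mathcal{I}$ into $t\circ u\in\mathcal{I}$ with $\|t\circ u\|_\mathcal{I}\le\|t\|\,\|u\|_\mathcal{I}$.

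First I would record some preliminaries. Since $\|u\|_\mathcal{I}\ge\|u\|$ for every operator in an operator ideal, the containment $A(B_{E_1}\times\cdots\times B_{E_n})\subseteq u(B_H)$ forces $\|A\|\le\|u\|\le\|u\|_\mathcal{I}$, so $\|A\|\le\|A\|_{\mathcal{L}_{\langle\mathcal{I}\rangle}}$; this yields positive definiteness. Taking $H=\mathbb{K}$ and $u=\mathrm{id}_\mathbb{K}$ shows $I_n$ is $\mathcal{I}$-bounded with $\|I_n\|_{\mathcal{L}_{\langle\mathcal{I}\rangle}}\le1$, and the reverse bound follows from $\|I_n\|=1$ and the previous inequality, giving axiom (b) of the definition. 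Finite type (indeed finite rank) operators have bounded range inside a finite-dimensional subspace, hence are $\mathcal{I}$-bounded because finite-rank operators belong to every operator ideal.

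Next I would establish the linear and $p$-normed structure of each component simultaneously. Given $A_1,A_2$ dominated by $u_1\in\mathcal{I}(H_1;F)$ and $u_2\in\mathcal{I}(H_2;F)$, I would form the $\ell_\infty$-direct sum $H_1\oplus_\infty H_2$ together with $u=u_1\circ P_1+u_2\circ P_2\in\mathcal{I}(H_1\oplus_\infty H_2;F)$, where $P_1,P_2$ are the norm-one coordinate projections. Since $u(B_{H_1\oplus_\infty H_2})=u_1(B_{H_1})+u_2(B_{H_2})\supseteq(A_1+A_2)(B_{E_1}\times\cdots\times B_{E_n})$ and $\|u\|_\mathcal{I}^p\le\|u_1\|_\mathcal{I}^p+\|u_2\|_\mathcal{I}^p$, this proves at once that the component is a subspace and that the $p$-triangle inequality holds; absolute homogeneity comes from the bijection $u\leftrightarrow\lambda^{-1}u$ between the dominating operators of $\lambda A$ and of $A$. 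Combined with the previous paragraph, each component is a $p$-normed space containing the finite type operators. The hyper-ideal inequality then follows from the driving geometric fact: $v:=\|B_1\|\cdots\|B_n\|\,(t\circ u)$ dominates $t\circ A\circ(B_1,\ldots,B_n)$ and satisfies $\|v\|_\mathcal{I}\le\|t\|\,\|B_1\|\cdots\|B_n\|\,\|u\|_\mathcal{I}$; taking the infimum over $u$ yields (\ref{eqhi}) and, a fortiori, the hyper-ideal property. This settles the $p$-normed case.

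For the $p$-Banach case I would invoke Theorem \ref{criterio}, so it remains to verify condition (ii); this is the step I expect to be the main obstacle. Given $(A_j)$ with $\sum_j\|A_j\|_{\mathcal{L}_{\langle\mathcal{I}\rangle}}^p<\infty$, the inequality $\|A_j\|\le\|A_j\|_{\mathcal{L}_{\langle\mathcal{I}\rangle}}$ and $p\le1$ force $\sum_j\|A_j\|<\infty$ (exactly as in the proof of Theorem \ref{pmdes}), so $A:=\sum_j A_j$ converges in $\mathcal{L}(E_1,\ldots,E_n;F)$. Choosing $u_j\in\mathcal{I}(H_j;F)$ dominating $A_j$ with $\sum_j\|u_j\|_\mathcal{I}^p$ arbitrarily close to $\sum_j\|A_j\|_{\mathcal{L}_{\langle\mathcal{I}\rangle}}^p$, I would pass to the infinite $\ell_\infty$-direct sum $H=\big(\bigoplus_j H_j\big)_{\ell_\infty}$ and set $u((h_j)_j)=\sum_j u_j(h_j)$, the series converging because $\sum_j\|u_j\|_\mathcal{I}<\infty$. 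The crucial point is that $u(B_H)$ absorbs every sum $\sum_j u_j(g_j)$ with $g_j\in B_{H_j}$, since the tuple $(g_j)_j$ has $\ell_\infty$-norm at most $1$; writing $A(x)=\sum_j A_j(x)$ with $A_j(x)\in u_j(B_{H_j})$, this shows $A(B_{E_1}\times\cdots\times B_{E_n})\subseteq u(B_H)$. Finally, since $\mathcal{I}$ is $p$-Banach and $\sum_j\|u_j\circ P_j\|_\mathcal{I}^p\le\sum_j\|u_j\|_\mathcal{I}^p<\infty$, the series $\sum_j u_j\circ P_j$ converges in $\mathcal{I}(H;F)$ to $u$, whence $u\in\mathcal{I}(H;F)$ with $\|u\|_\mathcal{I}^p\le\sum_j\|u_j\|_\mathcal{I}^p$. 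This yields $A\in\mathcal{L}_{\langle\mathcal{I}\rangle}$ and $\|A\|_{\mathcal{L}_{\langle\mathcal{I}\rangle}}^p\le\sum_j\|A_j\|_{\mathcal{L}_{\langle\mathcal{I}\rangle}}^p$, establishing condition (ii). The delicate points are the choice of the $\ell_\infty$-sum (rather than an $\ell_p$-sum, which need not be a Banach space) so that the projections are norm-one and the target ball absorbs all the pieces, and the appeal to completeness of $\mathcal{I}$ to sum the operators $u_j\circ P_j$ in $\mathcal{I}$-norm.
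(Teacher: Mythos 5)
Your proposal is correct and follows essentially the same route as the paper: for the $p$-Banach case you verify condition (ii) of Theorem \ref{criterio} exactly as the authors do, via near-optimal dominating operators $u_j$, the $\ell_\infty$-direct sum $H=\bigl(\bigoplus_j H_j\bigr)_{\ell_\infty}$ with its norm-one projections, and the $p$-completeness of $\mathcal{I}$ to sum $\sum_j u_j\circ\pi_j$; your treatment of $I_n$ and of the hyper-ideal inequality (rescaling by $B_l/\|B_l\|$ and passing to $\|B_1\|\cdots\|B_n\|\,t\circ u$) also matches. The only difference is that you additionally write out the merely $p$-normed case (subspace structure and $p$-triangle inequality via finite $\ell_\infty$-sums, finite-type operators, positive definiteness from $\|A\|\le\|A\|_{\mathcal{L}_{\langle\mathcal{I}\rangle}}$), which the paper explicitly omits as routine; your arguments there are sound.
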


\begin{proof}We omit the proof of the incomplete case. Let us apply Theorem \ref{criterio} to prove that $(\mathcal{L}_{\langle\mathcal{I}\rangle},\|\cdot\|_{\mathcal{L}_{\langle\mathcal{I}\rangle}})$ is a $p$-Banach hyper-ideal whenever $(\mathcal{I},\|\cdot\|_\mathcal{I})$ is a $p$-Banach operator ideal.\\
(i) As $Id_{\mathbb{K}} \in {\cal I}(\mathbb{K};\mathbb{K})$ and $\|Id_{\mathbb{K}}\|_{\cal I} = 1$, it follows easily that  $I_n\in\mathcal{L}_{\langle\mathcal{I}\rangle}(^n\mathbb{K};\mathbb{K})$ and $\|I_n\|_{\mathcal{L}_{\langle\mathcal{I}\rangle}}\leq 1$. For all $H$ and $u \in {\cal I}(H;\mathbb{K})$ such that $I_n\left((B_{\mathbb{K}})^n\right) \subseteq u(B_H)$, choosing $z \in B_H$ such that $u(z) = 1$ we get
$$\|u\|_{\cal I} \geq \|u\| \geq |u(z)| = 1,$$
from which we conclude that $\|I_n\|_{\mathcal{L}_{\langle\mathcal{I}\rangle}}= 1$.\\
(ii) Let $(A_j)_{j=1}^\infty\subseteq\mathcal{L}_{\langle\mathcal{I}\rangle}(E_1,\ldots,E_n;F)$ be such that $\sum\limits_{j=1}^\infty\|A_j\|_{\mathcal{L}_{\langle\mathcal{I}\rangle}}^p<\infty$. Let $\varepsilon>0$. For each $j\in \mathbb{N}$, the set $A_j(B_{E_1}\times\cdots\times B_{E_n})$ is $\mathcal{I}$-bounded, thus there exist a Banach space $H_j$ and an operator $u_j\in\mathcal{I}(H_j;F)$ such that $$A_j(B_{E_1}\times\cdots\times B_{E_n})\subseteq u_j(B_{H_j})$$ and $\|u_j\|_{\mathcal{I}}<(1+\varepsilon)\|A_j\|_{\mathcal{L}_{\langle\mathcal{I}\rangle}}$. So, $$\|A_j\|\le\|u_j\|\le\|u_j\|_{\mathcal{I}}<(1+\varepsilon)\|A_j\|_{\mathcal{L}_{\langle\mathcal{I}\rangle}}.$$ Making $\varepsilon \longrightarrow 0$ we obtain  $\|A_j\|\le\|A_j\|_{\mathcal{L}_{\langle\mathcal{I}\rangle}}$. As $p \leq 1$, we conclude that the series $\sum\limits_{j=1}^\infty A_j$ is absolutely convergent in the Banach space ${\cal L}(E_1, \ldots, E_n;F)$, hence convergent, say
$A:=\sum\limits_{j=1}^\infty A_j\in\mathcal{L}(E_1,\ldots,E_n;F)$. Let $$H:=\ap\bigoplus\limits_{j=1}^{\infty} H_j,\|\cdot\|_\infty\fp$$
be the Banach space of bounded sequences $(x_j)_{j=1}^\infty$, where $x_j\in H_j$ for every $j$, endowed with the sup norm.
Letting $\pi_j\colon H\longrightarrow H_j$, $j \in \mathbb{N}$, be the canonical projections and defining $$v_j\colon H\longrightarrow F~,~v_j=u_j\circ\pi_j,$$
by the ideal property of $\cal I$ we have that each $v_j\in\mathcal{I}(H;F)$ and $$\sum\limits_{j=1}^\infty\|v_j\|_{\mathcal{I}}^p=\sum\limits_{j=1}^\infty\|u_j\circ\pi_j\|_{\mathcal{I}}^p\le \sum\limits_{j=1}^\infty\|u_j\|_{\mathcal{I}}^p<(1+\varepsilon)^p\cdot\sum\limits_{j=1}^\infty \|A_j\|_{\mathcal{L}_{\langle\mathcal{I}\rangle}}^p<\infty.$$ Since  $(\mathcal{I},\|\cdot\|_{\cal I})$ is a $p$-Banach ideal, it follows that
$$u:=\sum\limits_{j=1}^\infty v_j\in\mathcal{I}(H;F)~\ \mbox{and}~\ \|u\|_{\mathcal{I}}^p< (1+\varepsilon)^p\cdot\sum\limits_{j=1}^\infty\|A_j\|_{\mathcal{L}_{\langle\mathcal{I}\rangle}}^p.$$
Given $y\in A(B_{E_1}\times\cdots\times B_{E_n})$, choose  $x_l\in B_{E_l}$, $l=1,\ldots,n$, such that $$y=A(x_1,\ldots,x_n)=\sum\limits_{j=1}^\infty A_j(x_1,\ldots,x_n).$$ As $A_j(B_{E_1}\times\cdots\times B_{E_n})\subseteq u_j(B_{H_j}),$ for every $j$ there is $z_j\in B_{H_j}$ such that $A_j(x_1,\ldots,x_n)=u_j(z_j).$ Hence, $$y=\sum\limits_{j=1}^\infty A_j(x_1,\ldots,x_n)=\sum\limits_{j=1}^\infty u_j(z_j)=\sum\limits_{j=1}^\infty u_j\circ\pi_j((z_k)_{k=1}^\infty)= \sum\limits_{j=1}^\infty v_j((z_k)_{k=1}^\infty)= u((z_k)_{k=1}^\infty),$$ where $(z_k)_{k=1}^\infty\in B_H$ because $\|z_j\| \leq 1$ for all $j$. Then $y\in u(B_H)$, which means that $A(B_{E_1}\times\cdots\times B_{E_n})\in C_{\mathcal{I}}(F)$. This proves that  $A\in\mathcal{L}_{\langle\mathcal{I}\rangle}(E_1,\ldots,E_n;F)$ and $$\|A\|_{\mathcal{L}_{\langle\mathcal{I}\rangle}}^p\le \|u\|_{\mathcal{I}}^p< (1+\varepsilon)^p\cdot\sum\limits_{j=1}^\infty\|A_j\|_{\mathcal{L}_{\langle\mathcal{I}\rangle}}^p.$$ Just make $\varepsilon \longrightarrow 0$ to obtain $\|A\|_{\mathcal{L}_{\langle\mathcal{I}\rangle}}^p\le \sum\limits_{j=1}^\infty\|A_j\|_{\mathcal{L}_{\langle\mathcal{I}\rangle}}^p$.\\
(iii) Let $t\in\mathcal{L}(F;G)$, $A\in\mathcal{L}_{\langle\mathcal{I}\rangle}(E_1,\ldots,E_n;F)$ and $B_1\in\mathcal{L}(G_{1},\ldots,G_{m_1};E_1)$,$\ldots$, $B_n\in\mathcal{L}(G_{m_{n-1}+1},\ldots,G_{m_n};E_n)$, where $1\le m_1<\cdots<m_n$. By the definition of ${\cal L}_{\langle{\cal I}\rangle}$ there are a Banach space $H$ and an operator $u\in\mathcal{I}(H;F)$ such that \begin{equation}\label{limi}A(B_{E_1}\times\cdots\times B_{E_n})\subseteq u(B_H).\end{equation} Of course we can assume $B_l\neq0$ for $l=1,\ldots,n$, and in this case  $$\frac{B_l}{\|B_l\|}\left(B_{G_{m_{l-1}+1}}\times\cdots\times B_{G_{m_l}}\right)\subseteq B_{E_l},$$ what gives $$A\circ\ap\frac{B_1}{\|B_1\|},\ldots,\frac{B_n}{\|B_n\|}\fp(B_{G_1}\times\cdots\times B_{G_{m_n}})\subseteq u(B_H).$$ Hence, $$t\circ A\circ(B_1,\ldots,B_n)(B_{G_1}\times\cdots\times B_{G_{m_n}})\subseteq (\|B_1\|\cdots\|B_n\|t\circ u)(B_H).$$ This proves that
$$t\circ A\circ(B_1,\ldots,B_n)\in\mathcal{L}_{\langle\mathcal{I}\rangle}(G_1,\ldots,G_{m_n};G),$$ because $\|B_1\|\cdots\|B_n\|t\circ u\in\mathcal{I}(H;G)$, and that $$\|t\circ A\circ(B_1,\ldots,B_n)\|_{\mathcal{L}_{\langle\mathcal{I}\rangle}} \le\|t\|\cdot\|u\|_{\mathcal{I}}\cdot\|B_1\|\cdots\|B_n\|.$$ Taking the infimum over all operators $u$ satisfying (\ref{limi}) we get the desired hyper-ideal inequality.\end{proof}

\begin{ex}\rm Let $\cal K$ and $\cal W$ denote the ideals of compact and weakly compact linear operators. Reasoning as in \cite[Example 3.1]{aronrueda2}, we see that $C_{\cal K}(E)$ is the collection of relatively compact subsets of $E$ and that $C_{\cal W}(E)$ is the collection of relatively weakly compact subsets of $E$. Then the classes ${\cal L}_{\cal K}$ of compact multilinear operators and ${\cal L}_{\cal W}$ of weakly compact multilinear operators are closed hyper-ideals (that is, Banach hyper-ideals with the usual sup norm) by Theorem \ref{proplimhi}
\end{ex}

The information in the example above was obtained in \cite{ewerton} by a different reasoning. To give new applications of the method introduced in this section, we consider the following concept introduced by Sinha and Karn \cite{sinha}, which has been playing an important role in the theory of operator ideals (cf. \cite{lassalleturco, pietschpams} and references therein) and in the study of variants of the approximation property (cf. \cite{choi, lassalleturco} and references therein).

\begin{definition}\rm Given $1 \leq p < \infty$, let $p'$ be given by $\frac{1}{p}+\frac{1}{p'}=1$. A subset $K$ of a Banach space $E$ is said to be {\it relatively $p$-compact} if there is a sequence $(x_j)_{j=1}^\infty \in \ell_p(E)$ such that
\begin{equation}K\subseteq\left\{\sum\limits_{j=1}^\infty \lambda_jx_j :  (\lambda_j)_{j=1}^\infty\in B_{\ell_{p'}} \right\}.\label{eq:apcomp}\end{equation}
\end{definition}

The definition below is the multilinear counterpart of the polynomial case studied by Aron and Rueda \cite{aronrueda3}.

\begin{definition}\rm An $n$-linear operator $A\in\mathcal{L}(E_1,\ldots,E_n;F)$ is said to be {\it $p$-compact}, $p \geq 1$, if $A(B_{E_1}\times\cdots\times B_{E_n})$ is a relatively $p$-compact subset of $F$. In this case we write  $A\in\mathcal{L}_{\mathcal{K}_p}(E_1,\ldots,E_n;F)$ and define
$$\|A\|_{\mathcal{L}_{\mathcal{K}_p}}=\inf\left\{\|(x_j)_{j=1}^\infty\|_p :  (x_j)_{j=1}^\infty \mbox{~satisfies} \ (\ref{eq:apcomp}){\rm ~for~} A(B_{E_1}\times\cdots\times B_{E_n}) \right\}.$$\end{definition}

The linear component of $\mathcal{L}_{\mathcal{K}_p}$ recovers the intensively studied ideal ${\cal K}_p$ of $p$-compact linear operators.

\begin{prop}\label{pcomilimex} For every $p \geq 1$, $\mathcal{L}_{\mathcal{K}_p} = \mathcal{L}_{\langle\mathcal{K}_p\rangle}$ and $\|\cdot\|_{\mathcal{L}_{\mathcal{K}_p}}=\|\cdot\|_{\mathcal{L}_{\langle\mathcal{K}_p\rangle}}$. In particular, the class $\mathcal{L}_{\mathcal{K}_p}$ of $p$-compact multilinear operators is a Banach hyper-ideal.\end{prop}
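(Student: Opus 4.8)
The plan is to reduce the whole statement to a single set-theoretic identity and then quote the machinery already in place. Concretely, I would first establish that, for every Banach space $F$, the collection $C_{\mathcal{K}_p}(F)$ of $\mathcal{K}_p$-bounded subsets of $F$ coincides with the collection of relatively $p$-compact subsets of $F$, and that for a fixed set $K$ the two attached infima agree: the one defining relative $p$-compactness (infimum of $\|(x_j)_{j=1}^\infty\|_p$ over witnessing sequences) and the one defining $\mathcal{K}_p$-boundedness (infimum of $\|u\|_{\mathcal{K}_p}$ over operators $u\in\mathcal{K}_p$ with $K\subseteq u(B_H)$). Granting this and specializing to $K=A(B_{E_1}\times\cdots\times B_{E_n})$ immediately yields $\mathcal{L}_{\mathcal{K}_p}=\mathcal{L}_{\langle\mathcal{K}_p\rangle}$ together with $\|\cdot\|_{\mathcal{L}_{\mathcal{K}_p}}=\|\cdot\|_{\mathcal{L}_{\langle\mathcal{K}_p\rangle}}$, since both the classes and their norms are defined solely through this set.

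For the direction showing that every $\mathcal{K}_p$-bounded set is relatively $p$-compact, I would suppose $K\subseteq u(B_H)$ with $u\in\mathcal{K}_p(H;F)$. By the Sinha--Karn \cite{sinha} description of $p$-compact operators, given $\varepsilon>0$ there is $(y_j)_{j=1}^\infty\in\ell_p(F)$ with $\|(y_j)_{j=1}^\infty\|_p<\|u\|_{\mathcal{K}_p}+\varepsilon$ and $u(B_H)\subseteq\{\sum_{j}\lambda_j y_j : (\lambda_j)_{j=1}^\infty\in B_{\ell_{p'}}\}$. Since relative $p$-compactness is inherited by subsets, the same sequence witnesses that $K$ is relatively $p$-compact, and letting $\varepsilon\to0$ and passing to infima gives one of the two norm inequalities.

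For the reverse direction, let $K$ be relatively $p$-compact with witnessing sequence $(y_j)_{j=1}^\infty\in\ell_p(F)$. I would define $u\colon\ell_{p'}\longrightarrow F$ by $u((\lambda_j)_{j=1}^\infty)=\sum_{j}\lambda_j y_j$; Hölder's inequality ensures the series converges and that $u$ is bounded, while $u(B_{\ell_{p'}})$ is exactly the set $\{\sum_{j}\lambda_j y_j : (\lambda_j)_{j=1}^\infty\in B_{\ell_{p'}}\}$. Hence $u\in\mathcal{K}_p(\ell_{p'};F)$ with $\|u\|_{\mathcal{K}_p}\le\|(y_j)_{j=1}^\infty\|_p$ and $K\subseteq u(B_{\ell_{p'}})$, so $K$ is $\mathcal{K}_p$-bounded; taking infima supplies the complementary norm inequality and closes the identity, giving $\mathcal{L}_{\mathcal{K}_p}=\mathcal{L}_{\langle\mathcal{K}_p\rangle}$ isometrically.

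It remains to derive the ``in particular'' clause. Here I would recall that $(\mathcal{K}_p,\|\cdot\|_{\mathcal{K}_p})$ is a Banach (that is, $1$-Banach) operator ideal, a fact due to Sinha and Karn \cite{sinha}; Theorem \ref{proplimhi} applied to this ideal then shows that $\mathcal{L}_{\langle\mathcal{K}_p\rangle}$ is a Banach hyper-ideal, and the isometric identity just established transports the conclusion to $\mathcal{L}_{\mathcal{K}_p}$. The one point demanding care is the reverse inclusion: one must check not merely that the canonical $u$ is bounded, but that it is genuinely $p$-compact with $\|u\|_{\mathcal{K}_p}$ controlled by $\|(y_j)_{j=1}^\infty\|_p$ rather than by $\|u\|$ alone. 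This is precisely where the Sinha--Karn definition of the $p$-compact norm as an infimum over witnessing sequences is indispensable, and it is what forces the two infima to agree exactly rather than only up to a constant.
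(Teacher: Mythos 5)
Your proposal is correct and follows essentially the same route as the paper: identify $C_{\mathcal{K}_p}(F)$ with the relatively $p$-compact sets, get the inequality $\|\cdot\|_{\mathcal{L}_{\mathcal{K}_p}}\le\|\cdot\|_{\mathcal{L}_{\langle\mathcal{K}_p\rangle}}$ from the infimum defining the Sinha--Karn norm, and for the reverse inequality build $u\colon\ell_{p'}\to F$, $u((\lambda_j)_j)=\sum_j\lambda_jx_j$, observing that $u(B_{\ell_{p'}})$ is exactly the witnessing set so that $\|u\|_{\mathcal{K}_p}\le\|(x_j)_j\|_p$, before invoking Theorem \ref{proplimhi}. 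The only cosmetic difference is that you prove the set-level identification directly where the paper cites \cite[Example 3.1]{aronrueda2}, and you make explicit the (needed, and correct) fact that $\mathcal{K}_p$ is a Banach operator ideal.
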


\begin{proof} In \cite[Example 3.1]{aronrueda2} it is proved that, for every Banach space $E$, $C_{{\cal K}_p}(E)$ coincides with the collection of relatively $p$-compact subsets of $E$. So, $\mathcal{L}_{\mathcal{K}_p} = \mathcal{L}_{\langle\mathcal{K}_p\rangle}$. The inequality $\|\cdot\|_{\mathcal{L}_{\mathcal{K}_p}}\leq \|\cdot\|_{\mathcal{L}_{\langle\mathcal{K}_p\rangle}}$ follows easily from the definitions. Given $A\in\mathcal{L}_{\mathcal{K}_p}(E_1,\ldots,E_n;F)$ and a sequence $(x_j)_{j=1}^\infty\in\ell_p(F)$ satisfying (\ref{eq:apcomp}) for $A(B_{E_1}\times\cdots\times B_{E_n})$, the linear operator
$$u\colon \ell_{p'}\longrightarrow F~,~u((\lambda_j)_{j=1}^\infty)=\sum\limits_{j=1}^\infty \lambda_jx_j,$$
is well defined by H\"older's inequality. In
$$A(B_{E_1}\times\cdots\times B_{E_n})\subseteq  \left\{\sum\limits_{j=1}^\infty \lambda_jx_j :  (\lambda_j)_{j=1}^\infty\in B_{\ell_{p'}} \right\}= u(B_{\ell_{p'}}),$$
the equality shows that $u$ is a $p$-compact linear operator and the inclusion gives
$$\|A\|_{\mathcal{L}_{\langle\mathcal{K}_p\rangle}}\le\|u\|_{\mathcal{K}_p}\le\|(x_j)_{j=1}^\infty\|_p.$$ Taking the infimum over all such sequences $(x_j)_{j=1}^\infty$ we conclude that $\|A\|_{\mathcal{L}_{\langle\mathcal{K}_p\rangle}}\le\|A\|_{\mathcal{L}_{\mathcal{K}_p}}$.

Now the second assertion follows from Theorem \ref{proplimhi}.
\end{proof}

\bigskip

\noindent{\bf Acknowledgement.} We thank Professors Mary Lilian Louren\c co for making our collaboration possible and Vin\'icius V. F\'avaro for his helpful suggestions.

\vspace{2em}

\noindent Faculdade de Matem\'atica\\
Universidade Federal de Uberl\^andia\\
38.400-902 -- Uberl\^andia, Brazil\\
e-mails: botelho@ufu.br, ewerton@powerline.com.br.

\end{document}